\definecolor{darkgreen}{rgb}{0,0.5,0}
\definecolor{darkblue}{rgb}{0,0,0.7}
\definecolor{darkred}{rgb}{0.9,0.1,0.1}
\newtheorem{theorem}{Theorem}
\newtheorem{proposition}{Proposition}
\newtheorem{lemma}[proposition]{Lemma}
\newtheorem{corollary}[proposition]{Corollary}
\newtheorem{claim}[proposition]{Claim}
\theoremstyle{remark}
\newtheorem{remark}[proposition]{Remark}
\theoremstyle{definition}
\newtheorem{definition}[proposition]{Definition}
\numberwithin{equation}{section}
\numberwithin{proposition}{section}
\newcommand{\Z}{\mathbb{Z}}
\newcommand{\N}{\mathbb{N}}
\newcommand{\R}{\mathbb{R}}
\newcommand{\E}{\mathbb{E}}
\renewcommand{\P}{\mathbb{P}}
\renewcommand{\subset}{\subseteq}
\DeclareMathOperator*{\argmax}{arg\,max}
\renewcommand{\bar}{\overline}
\renewcommand{\tilde}{\widetilde}
\newcommand{\indc}{\mathds{1}}
\newcommand{\1}{\mathds{1}}
\newcommand{\pbold}{\textbf{p}}
\begin{document}

\title[Random walk on the interchange process]{Random walk on a perturbation of the infinitely-fast mixing interchange process}

\begin{abstract}
We consider a random walk in dimension $d\geq 1$ in a dynamic random environment evolving as an interchange process with rate $\gamma>0$. We only assume that the annealed drift is non--zero. We prove that the empirical velocity of the walker $X_t/t$ eventually lies in an arbitrary small ball around the annealed drift if we choose $\gamma$ large enough. This statement is thus a perturbation of the case $\gamma =+\infty$ where the environment is refreshed between each step of the walker. We extend three-way part of the results of  \cite{HS15}, where the environment was given by the $1$--dimensional exclusion process: $(i)$ We deal with any dimension $d\geq 1$; $(ii)$ Each particle of the interchange process carries a transition vector chosen according to an arbitrary law $\mu$; $(iii)$ We show that $X_t/t$ is not only in the same direction of the annealed drift, but that it is also close to it.
%Our result extends part of the results in \cite{HS15} : $(i)$ it is valid for any dimension $d\geq 1$ ; $(ii)$ each particle of the interchange process carry on a transition vector chosen according to an arbitrary law $\mu$ ; $(iii)$ we precise that asymptotically $X_t/t$ is close to the annealed drift.
\bigskip

\noindent  \emph{AMS  subject classification (2010 MSC)}: 
60K37, %processes in random environments
 82C22, %Interacting particle systems
  60Fxx, %Limit theorems
  82D30.%	Random media, disordered materials (including liquid crystals and spin glasses)

\smallskip
\noindent
\emph{Keywords}: Random walk, dynamic random environment, interchange process, limit theorem, renormalization.

\thanks{ The present work was financially supported  by the European Union's Horizon 2020 research and innovation programme under the Marie Sklodowska-Curie Grant agreement No 656047.
 }

\author[M. Salvi]{Michele Salvi}
\address[M. Salvi]{Universit\'e Paris-Dauphine, PSL Research University, CNRS, UMR [7534], CEREMADE, 75016 Paris, France}
\email{salvi@ceremade.dauphine.fr}

\author[F. Simenhaus]{Fran\c cois Simenhaus}
\address[F. Simenhaus]{Universit\'e Paris-Dauphine, PSL Research University, CNRS, UMR [7534], CEREMADE, 75016 Paris, France}
\email{simenhaus@ceremade.dauphine.fr}

\end{abstract}

\keywords{}
\subjclass[2010]{}
%\date{\today}

\maketitle

\section{Introduction, model and result}
%??Insister sur les améliorations:
%\begin{itemize}
%\item dimension
%\item loi $\mu$ général
%\item limite pour $\gamma$ grand
%\end{itemize}
%
%***********
We are interested in a Markovian walker on $\Z^d$ whose transitions are given by an underlying random environment. The environment itself evolves in time according to an interchange process: We initially put a single particle on each site of $\Z^d$ and suppose the particles carry a transition-probability vector chosen in an i.i.d.~way. We attach independent Poisson clocks with parameter $\gamma >0$ on each edge of $\Z^d$ and, when the clock attached to $\{x,y\}$ rings, the system is updated by exchanging the particles sitting in $x$ and $y$. 
We consider a time-discrete walker $(X_t)_{t\in\N}$ on top of this system: For each jump it chooses a direction according to the transition-probability vector carried by the particle on which it sits. We assume that the expected drift of the transition-probability vectors is non-zero. If the environment is completely refreshed at each step of $(X_t)$ (corresponding to $\gamma=+\infty$), the walker evolves in a homogenous environment and thus admits the expected drift as asymptotic velocity. We are concerned here with a perturbation of this regime. We actually prove that, choosing $\gamma$ large enough, the empirical velocity of the walker $X_t/t$ eventually lives in an arbitrary small ball around the expected drift. 

\smallskip

While the problem of the asymptotic velocity for random walks in random media has been widely studied in the case of static environments (see for example \cite{BCR} and references therein for the general i.i.d.~setting and \cite{BS13} for the reversible case), the case of  environments that evolve in time has gained a great attention from the mathematical community only in recent years. When the environment dynamics has good space-time mixing conditions, traps, i.e.~``unfavorable'' regions of the environment, tend to dissolve on a shorter time scale than the one that is interesting for the analysis of the walker, and therefore in many cases it has been possible to derive a law of large numbers (see, e.g., \cite{BZ06, DL09, AdHR11,AdSV13,RV13,dHdSS13} and references therein). It appears more complicated to study the asymptotic properties of a walker moving in an environment that mixes slowly, for example when the underlying dynamics is conservative. In this setting, some results have been lately obtained 
 for a walk on the  simple symmetric exclusion process with strong drift conditions in \cite{AdSV13}, for a walk moving on a collection of independent particles performing simple symmetric random walks in \cite{HdHdSST15}, for a walk moving on a super-critical contact process in \cite{dHdS14}, and for the perturbation of a homogeneous walk  on a class of dynamic environments satisfying the Poincar\'e inequality in \cite{ABF16}.
In \cite{HS15} the authors consider a one-dimensional dynamic random environment evolving as the simple exclusion process with jump parameter $\gamma$. A nearest-neighbor walker moves on this environment at integer times, jumping to the right (left) with probability $\alpha>0$ (resp.~$1-\alpha$) if it finds itself on a site occupied by a particle, and to the right (left) with probability $\beta>0$ (resp.~$1-\beta$) if it is on an empty site. The authors show that if in the case $\gamma=+\infty$ the walker has a strictly positive limiting speed, then, for $\gamma$ big enough, the walk will admit a strictly positive limiting speed, too. They also prove an analogous result (requiring only transience of the walk for $\gamma=0$) for $\gamma$ small enough.
%dHdSS13

\smallskip
In the present paper we do not prove a full law of large numbers: We postpone to a future work the construction of a suitable renewal structure (see e.g.~\cite{AdSV13,HdHdSST15,HS15}), which is the main tool to derive such a theorem. The present work can however be thought as a generalization of \cite[Theorem 1.1]{HS15} in three different directions:
\begin{itemize}
	\item[(i)] We pass from dimension $d=1$ to $d\geq 1$. We point out that most of the results for random walks in dynamic random environment have been achieved so far for dimension $d=1$, especially in the slowly-mixing dynamics framework;
	\item[(ii)] Instead of considering the simple exclusion process for the environment dynamics, we deal with the interchange process. In particular, while in \cite{HS15} only two possible transition-probability vectors are allowed, initially chosen with a Bernoulli product measure, in the present article we consider any vector coming from the $d$-dimensional simplex, sampled at time $0$ with an arbitrary product measure $\mu^{\otimes\Z^d}$;
	\item[(iii)] While in \cite{HS15} it is proven that, for $\gamma$ sufficiently large, $X_t/t$ is just eventually positive in the average drift direction, we prove here that, eventually, $X_t/t$ stays in fact close to the average drift.
\end{itemize}
 The present article finds a clear inspiration in \cite{HS15} and follows its general strategy. Nevertheless, in order to obtain the desired generalisations one needs to overcome several technical difficulties, especially due to high dimensionality. 
\subsection{Notation, model and main result.}
We call $\N:=\{0,1,...\}$ the set of natural numbers. We write $\mathcal I:=\{-1,\cdots,-d\}\cup\{1,\cdots,d\}$ and define $\mathcal N:=(e_i)_{i\in\mathcal I}\,$, where, for each $i=1,...,d$,  $e_i$ denotes the $i$-th element of the canonical basis of $\R^d$, while $e_{-i}:=-e_i$. We define $\mathcal S:=\{s\in[0,1]^{\mathcal I}:\,s_1+...+s_{d}+s_{-1}+...+s_{-d}=1\}$ to be the simplex of possible transition-probability vectors and we associate to each $s\in \mathcal S$ its drift 
\begin{equation*}
D(s):=\sum_{i\in\mathcal I}s_i\, e_i\,.
\end{equation*}
We denote by $\Omega:=(\mathcal S^{\Z^d})^{\R_+}$ the space of space-time environments. We think of an element $\omega$ as a time-indexed sequence of environments, where an environment is a collection of transition-probability vectors associated to each point of the lattice: $\omega_y(t)\in\mathcal S$ is the transition-probability vector in the site $y\in\Z^d$ at time $t\geq 0$, and $\omega^i_y(t)$ represents its $i$-th coordinate, $i\in \mathcal I$.

\smallskip

Given an environment $\omega \in \Omega$ we define the discrete-time random walk $(X_t)_{t\in\N}$ on $\Z^d$ with law $P_{\omega}$ as the process defined by the following: For all $k\in\N$, $y\in\Z^d$ and $i\in \mathcal I$ we have 
\begin{align}
&P_\omega(X_0=0)=1 \,,\nonumber\\
&P_\omega(X_{k+1}=y+e_i\,|\,X_k=y)=\omega^i_{y}(k)\,.\nonumber
\end{align}
%where $\eta^i_\cdot(\cdot)$ is the $i$-th component of the $2d$-vector $\eta_\cdot(\cdot)$.
Notice that, albeit $\omega$ has been defined for each positive real time $t$, the walk $(X_t)_{t\in\N}$ is influenced only by the environment at integer times. 

\smallskip
%Given $\omega \in \Omega$ and $x\in \Z^d$, we define the discrete-time random walk $(X_n)_{n\in\N}$ on $\Z^d$ with law $P^{\eta}_{n,x}$, such that for all $k\geq n$ and $y\in\Z^d$ and $i$ in $\{-d,\cdots,d\}\setminus\{0\}$
%\begin{align}
%&P^\eta_{n,x}(X_n=x)=1\\
%&P^\eta_{n,x}(X_{k+1}=y+e_i|\,X_k=y)=\eta^i_{y}(k)
%\end{align}
%where $\eta^i_\cdot(\cdot)$ is the $i$-th component of the $2d$-vector $\eta_\cdot(\cdot)$.
We now construct a probability measure on $\Omega$, corresponding to an interchange process where each particle carries a transition-probability vector chosen once and for all at time zero. We let $\mu$ be a probability measure on $\mathcal S$ and indicate with $\E_\mu[\cdot]$ the expectation w.r.t.~$\mu$. 
 Our unique assumption on $\mu$ is that
\begin{equation}
\label{eq:main assumption}
\E_{\mu}\left[D\right]\neq 0\,.
\end{equation}
In particular, we point out that we do not require any kind of ellipticity on the environment.
We consider the product law $\pi:=\mu^{\otimes\Z^d}$ on $\mathcal S^{\Z^d}$.
%call $\eta=\{\eta_x\}_{x\in\Z^d}$ a random field
%%so that $\{\eta_x\}_{x\in\Z^d}$ is an i.i.d.~field of random variables (with values in $\mathcal S$) under $\pi$. 
%We can imagine that at each site $x\in\Z^d$ there is a particle carrying the configuration $\eta_x$.
%
We finally construct a law $\P$ on $\Omega$ in the following way: We start from an $\eta\in\mathcal S^{\Z^d}$ chosen according to $\pi$, and we let it evolve according to the %interchange process, i.e.~with 
 generator 
$$
\mathcal Lf(\tilde\eta)
	=\frac\gamma 2\sum_{x,y\in\Z^d:\,\|x-y\|=1}\big(f(\tilde\eta\circ\sigma_{xy})-f(\tilde\eta)\big),
$$
where $\sigma_{xy}(\tilde\eta)$ indicates the environment obtained from $\tilde\eta\in\mathcal S^{\Z^d}$ by interchanging the particles in $x$ and $y$, i.e.~the values of $\tilde\eta_x$ and $\tilde\eta_y$. We indicate by $\|\cdot\|$ the usual $L^2$-norm. We indicate with $\P^\eta$ the law of the evolution of the environment started from a given $\eta\in\mathcal S^{\Z^d}$, $\E^\eta$ being the corresponding expectation. Notice that we will always use bold characters for indicating probabilities and expectations when we will deal with the sole environment process.
%$\bar\eta(t):=\E^\eta[\eta(t)]$. 
In our particle representation, we see that $\mathcal L$ is nothing but the generator of the interchange process: We can think to assign to each edge in $\Z^d$ an exponential clock of parameter $\gamma$, independent of all the other clocks. Whenever the clock of an edge $\{x,y\}$ rings, we proceed with exchanging the positions of the particles in $x$ and $y$.
We  define the law of the random walk in the dynamic environment starting from a given 
%by the semi product,
%\begin{equation*}
%P:= \P \times P^{\eta(\cdot)}_{0,0}\\
%\end{equation*}
%and the law of the walker in the dynamics random environment starting from 
$\eta \in \mathcal S^{\Z^d}$ as
\begin{equation*}
P^{\eta}:= \P^{\eta} \times P_{\omega}\,,
\end{equation*}
and finally the annealed law, where we average over the initial configuration according to $\pi$, is
\begin{align*}
P:=\P\times P_{\omega}\,.%\qquad\rosso{\mbox{rather }P=\pi\otimes P^{\eta}?}.
\end{align*}
%Note that in \eqref{eqn:v} we gave $v$ its name because in the case ``$\gamma=\infty$'' it is easy to prove that $v$ is the limiting speed, or \emph{v}elocity, of the random walk $(X_n)_{n\in\N_0}$ in direction $\vec v$.
Our goal is to prove the following theorem: 

\begin{theorem}%[Equivalent of Theorem 3]
\label{thm:ballistic}
Under assumption \eqref{eq:main assumption}, for all $\varepsilon>0$, there exists $\gamma(\varepsilon)$ such that, for all $\gamma\geq \gamma(\varepsilon)$, $P$-a.s.~it holds that, for $t$ large enough,
$$
\frac{X_t}{t}\in B_\varepsilon\big(\E_\mu[D]\big)\,,
%P -a.s. \  eventually\quad\frac{X_n}{n}\in B_\varepsilon(\E_\mu[d(s)]) .
$$
%$$
%P\left(\,\mbox{for n large enough }\,\frac{X_n}{n}\in B_\varepsilon\big(\E_\mu[d(s)]\big) \right)=1,
%%P -a.s. \  eventually\quad\frac{X_n}{n}\in B_\varepsilon(\E_\mu[d(s)]) .
%$$
where $B_\varepsilon(\E_\mu[D])$ is the $L^2$-ball of radius $\varepsilon$ around the averaged drift $\E_\mu[D]$.
\end{theorem}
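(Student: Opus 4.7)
The strategy is a multi-scale renormalization argument in the spirit of \cite{HS15}. The plan is to introduce a hierarchy of space-time scales $(L_k,T_k)_{k\geq 0}$ with $T_{k+1}\gg T_k$ and a notion of \emph{good} space-time box at each scale, and to show by induction on $k$ that the probability for a box at scale $k$ to be good tends to $1$ as $k\to\infty$, provided $\gamma$ is chosen large enough depending on $\varepsilon$. Writing $v:=\E_\mu[D]$, a box $B$ of scale $k$ is declared good if the walker started anywhere at the bottom of $B$ satisfies $\|X_{T_k}-T_k v\|\leq \varepsilon T_k /2$; the theorem will then follow by a Borel--Cantelli argument once we show that the probability of a scale-$k$ bad box decays faster than any polynomial in $k$.

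For the base case $k=0$, I would couple the dynamics at rate $\gamma$ with the idealized refreshed dynamics ($\gamma=+\infty$), for which $X_{T_0}$ is a sum of $T_0$ i.i.d.~vectors with law $\mu$ and the usual Cram\'er-type large deviation bound gives the required estimate. The coupling error is driven by the walker revisiting particles it has already queried: conditioning on a high-probability event under which every particle near the walker has undergone many exchanges during each unit time interval, each visited particle is very likely to have been replaced at the walker's next step by an independent sample from $\mu$. Quantifying this reduces the base case to a large-deviation estimate, at the cost of an error term that can be made arbitrarily small by taking $\gamma$ large relative to $T_0$.

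The inductive step is the heart of the argument. Assuming scale-$k$ boxes are good with probability at least $1-p_k$, one decomposes a scale-$(k+1)$ box into roughly $T_{k+1}/T_k$ successive time layers of scale-$k$ sub-boxes, and tracks the walker layer by layer. If every sub-box visited by the walker is good, the triangle inequality yields $\|X_{T_{k+1}}-T_{k+1}v\|\leq \varepsilon T_{k+1}/2$. To bound the probability of this event, I would combine two ingredients: a finite-propagation estimate for the interchange process, obtained by discarding with exponentially small probability the event that too many clocks ring at any given edge within a time window of length $T_k$ (so that information essentially stays within a spatial cone around the walker's trajectory), and the Markov property of the environment across the layer boundaries to decorrelate sub-boxes supported on disjoint space-time regions. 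A union bound over the polynomial number of sub-boxes the walker can visit then yields a recursion of the form $p_{k+1}\leq C(L_{k+1}/L_k)^{d+1}\,p_k^2+e^{-cT_k}$, which iterates to super-polynomial decay for a suitable choice of scales.

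The main obstacle, and the chief novelty with respect to \cite{HS15}, is to set up this scheme in arbitrary dimension and without any ellipticity assumption. Absence of ellipticity means the walker can be pinned by a singularly degenerate initial sample of $\mu$, which forces the geometry of the boxes to be chosen carefully (elongated in the direction of $v$, with transverse width tailored to the LLN fluctuations) so that the walker cannot escape a narrow trajectory tube already on scale $T_0$. Higher dimensionality compounds the difficulty: the set of admissible trajectories and the combinatorics of sub-boxes at each scale grow polynomially in $d$, so the quadratic decay $p_k^2$ must be balanced against a larger geometric prefactor in the recursion. Once the inductive estimate on $p_k$ is obtained and shown to be summable along the walker's full trajectory, the Borel--Cantelli lemma closes the proof.
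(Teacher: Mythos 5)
Your proposal captures the broad shape of a multi-scale argument, but it diverges from the paper on the two most load-bearing ideas, and both divergences are genuine gaps rather than just alternative routes.

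First, you declare a space-time box \emph{good} according to the behavior of the walker inside it. This is problematic: the walker is itself random given the environment, so ``the walker started anywhere at the bottom of $B$ satisfies $\|X_{T_k}-T_kv\|\le \varepsilon T_k/2$'' is not an event you can assign to a box (unless you mean $P$-a.s., which is hopeless, or ``with high probability,'' which collapses the notion). The paper defines goodness purely in terms of the environment: a site is $L$-good if, for every ball of radius $L'\ge L$ around it, the empirical density of each \emph{type} of particle (obtained by discretizing the simplex $\mathcal S$ into $2^{2dN}$ types, see Definition \ref{def:good}) is within $\epsilon_L$ of its $\mu$-mean. This choice is essential because it makes goodness a deterministic property of the configuration $\eta$, and it enables the central technical estimate you are missing: Proposition \ref{proposition1}, which says that if a site is $L$-good at time $0$, then it is $J$-good after any time $t$ with $\gamma t>L^3$, with probability $\ge 1-c\,{\rm e}^{-cJ^d(\epsilon_J-\epsilon_L)^2}/(\epsilon_J-\epsilon_L)^2$. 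Proving this propagation-of-goodness requires Lemmas \ref{lemma1}--\ref{lemma2}: a Liggett-inequality concentration bound and careful heat-kernel estimates for the single-particle motion (the discrete heat kernel, compared to the Gaussian one, over crowns). Nothing in your ``finite propagation by controlling the number of rings per edge'' serves this role, and the interchange process has no genuine finite propagation cone; what the paper exploits is quantitative \emph{diffusive} mixing of the labels, not confinement.

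Second, your recursion $p_{k+1}\le C(L_{k+1}/L_k)^{d+1}p_k^2+{\rm e}^{-cT_k}$ is the Sznitman-style pigeonhole scheme, which needs (i) a quantified independence/decoupling statement for two bad sub-boxes and (ii) an argument that one bad sub-box can be tolerated. The paper uses a different mechanism: after projecting onto a fixed direction $\vec v$, it stochastically dominates the increment of $X^{\vec v}$ over each time-block of length $t_n$ by an i.i.d.\ bounded random variable $Z_k$ (thanks to the inductive hypothesis and to Proposition \ref{proposition1} ensuring the environment around the walker is $\phi_{t_n}$-good at each block boundary). The conclusion at scale $t_{n+1}\approx t_n^2$ then follows from Hoeffding-type concentration of $\sum_k Z_k$ plus a union bound over $(y,s)\in B_{t_{n+1}}\times[r,t_{n+1}]$ for failure of goodness. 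No $p_k^2$ ever appears, and no decoupling lemma is needed. Also, your base case via coupling to the $\gamma=\infty$ chain is not how the paper initializes: Proposition \ref{lemma4} compares \emph{each single step} of the walker, conditionally on a good environment, with an i.i.d.\ sequence $Y_k$ whose law is a $\delta$-perturbation of the $\mu$-marginal; the quantization parameter $N$ enters precisely here ($2^{-(N-1)}d<\delta$), which is another role your proposal does not account for. Finally, the ``elongated boxes'' remedy for non-ellipticity is not needed and not what the paper does; non-ellipticity is handled entirely by the type discretization of $\mathcal S$ and the step-wise comparison above.
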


\subsection{Outline of the paper.} 
In Section \ref{sec:auxiliary} we will prepare the technical  machinery needed for the proof of Theorem \ref{thm:ballistic}. The section is devoted only to the properties of the underlying interchange process: We say that a site $x\in \Z^d$ is $L$-good for a configuration $\eta\in \mathcal S^{\Z^d}$ if the empirical distribution of the environment in any ball of size larger than $L$ centred in $x$ is close to $\mu$, see Definition \ref{def:good} for a precise statement. In Proposition \ref{proposition1} we prove that if a site is $L$--good at time $0$, then it will be $J$--good ($J<L$) after a time $t$ satisfying $\gamma t>L^3$ with high probability. To this end, we will need a concentration inequality, Lemma \ref{lemma1}, and some estimates on the density of the particles carrying some type of transition-probability vector, Lemma \ref{lemma2}.
In Section \ref{sec:proof_of_theorem} we pass to the proof of Theorem \ref{thm:ballistic}.
 %We will actually prove an equivalent version of it, see Proposition \ref{prop:projection}. 
 This will be achieved through a renormalization procedure. Proposition \ref{lemma4} deals with the initial step of the renormalization, giving the result up to any time $T$ choosing $\gamma$ large enough depending on $T$. Proposition \ref{prop:renorm} extends the result for any time $t>T$ by iteration. Finally, in Subsection \ref{proiezione} we finalize the proof.

%%%%%%%%%%%%%%%%%%%%%%%%%%%%%%%%%%%%%%%%%%%%%%%%%%%%%%%%%%%
%%%%%%%%%%%%%%%%%%%%%%%%%%%%%%%%%%%%%%%%%%%%%%%%%%%%%%%%%%%
%%%%%%%%%%%%%%%%%%%%%%%%%%%%%%%%%%%%%%%%%%%%%%%%%%%%%%%%%%%
%%%%%%%%%%%%%%%%%%%%%%%%%%%%%%%%%%%%%%%%%%%%%%%%%%%%%%%%%%%
%%%%%%%%%%%%%%%%%%%%%%%%%%%%%%%%%%%%%%%%%%%%%%%%%%%%%%%%%%%

\section{Auxiliary results on the environment dynamics}\label{sec:auxiliary}

In this section we will collect a series of results that concern only the environment dynamics.
We fix here and for the whole section an integer $N\in\N$. Note that all the statements in this section involve constants that may or may not depend on $N$, but we will not comment on that since it will not affect the results that we want to prove.
We consider a map $T:\,\mathcal S\to \mathcal T$, where we call $\mathcal T:=\{0,...,2^N-1\}^{{\mathcal I}}$ the space of \textit{types}, with $T:\,s\mapsto T(s)$ such that, for each coordinate $i\in\mathcal I$, 
$$
T_i(s):=\max\big\{j\in\{0,...,2^N-1\}:\,2^{-N}j\leq s_i\big\}\,.%=\lfloor s_i 2^N\rfloor.
$$
In particular, if $s_i\not=1$, then
 $ \frac{T_i(s)}{2^N} \leq s_i < \frac{T_i(s)+1}{2^N}$.
%Note that in particular $s^i - \frac{1}{2^N} < \frac{T^i(s)}{2^N}\leq s^i$.
We simply say that an element $s\in\mathcal S$ is of type $k$ (where $k$ is a $2d$-dimensional integer multi-index) if $T(s)=k$ and call $p_k:=\mu(T(s)=k)$ the $\mu$-probability that a configuration $s$ is of type $k$. 
We point out that if $\mu$ gave positive weight only to a finite number of possible transition-probability vectors of $\mathcal S$, there would be no need to introduce types (notice for example that the simple exclusion process treated in \cite{HS15} corresponds to having only two possible transition-probability vectors). On the other hand, in order to deal with much more general $\mu$'s, the reduction to a finite number of types of particles is fundamental for our technique to work.

\smallskip

Given $x\in\Z^d$, $L\in\N$, $k\in\mathcal T$ and $\eta\in\mathcal S^{\Z^d}$, we let
$$
\langle\eta\rangle^k_{x,L}:=\frac{1}{|B_L(x)|}\sum_{y\in B_L(x)}\mathds{1}_{T(\eta_y)=k }
$$
be the empirical density of particles %that carry a transition-probability vector 
of type $k$ in a ball of radius $L$ around $x$. We consider $L^2$ balls: 
%??Est ce qu'on ne se sert pas seulement de la boule centrée??
$$
B_L(x):=\{y\in\Z^d:\,\|x-y\|\leq L\},
$$
so that the cardinality of a ball is $ c_1 L^d\leq|B_L(x)|\leq  c_2 L^d$ for some constants $c_1,\,c_2>0$ depending on the dimension $d$.

\begin{remark}
For simplicity, throughout the article we will denote by $c,\,c_1,\,c_2,...$ some strictly positive constants the value of which might change from one expression to the other. These constants will not depend on the other variables involved (with the possible exception of $d$ and $N$), unless otherwise specified. 
\end{remark}

For simplicity, for  $L\in\N$, we abbreviate $\langle\eta\rangle^k_{L}:=\langle\eta\rangle^k_{0,L}$ and $B_L:=B_L(0)$.
We also fix a decreasing sequence $(\epsilon_L)_{L\in \N}$  of  real numbers in $(0,1)$ by 
\begin{equation}\label{epsilon}
\epsilon_L:=\frac{1}{1+\ln(L)}.
\end{equation}
The numbers $\epsilon_L$ are meant to control the
difference between the theoretical density $p_k$ and the empirical density of particles of  type $k$ in a box of size $L$:
\begin{definition}\label{def:good}
For $L\in\N$, we say that a site $x\in\Z^d$ is $L$-\textit{good} for a configuration $\eta\in\mathcal S^{\Z^d}$  if
\begin{align}
|\langle\eta\rangle_{x,L'}^k-p_k|\leq \epsilon_L\qquad \forall L'\geq L,\,\forall k\in\mathcal T.
\end{align}
In a configuration $\eta$, we call the set of $L$-good sites $G(\eta,L)$.
\end{definition}
\begin{remark}
Note that the definition of good sites depends on $N$.
\end{remark}
In words, $x\in G(\eta,L)$ if in every ball of radius $L'\geq L$ the density of particles of type $k$ is close to its theoretical mean, with an error of at most $\epsilon_L$, for every possible type $k$.
The typical deviation, under the equilibrium measure, of the empirical density in a $d$--dimensional ball of radius $L$ is of order $1/L^{d/2}$. Our conservative choice of $(\epsilon_L)$ in \eqref{epsilon} guarantees that a site has large probability to be good when $L$ is large.

\smallskip

%We want to prove the following two Propositions:
The following proposition is one of the main tools that we will use for proving Theorem \ref{thm:ballistic}.

\begin{proposition}%[Equivalent of Proposition 1]
\label{proposition1}
Suppose that $0\in G(\eta,L)$ for some $\eta\in\mathcal S^{\Z^d}$, $L\in\N$.
%Consider an initial configuration $\eta\in\mathcal S^{\Z^d}$ and an $L\in\N$ such that $0\in G(\eta,L)$.
Then there exist $c_1,c_2>0$ such that the following is true: For all $J\in\N$,  $J<L$,  such that %$L(\epsilon_J-\epsilon_L)\geq C$ OR BETTER 
$L(\epsilon_J-\epsilon_L)$ is large enough, and for all $t>0$ such that $\gamma t>L^{3}$, it holds
\begin{align*}
\P^\eta\big(0\not\in G(\eta(t),J)\big)
	\leq c_1\frac{{\rm e}^{-c_2J^d(\epsilon_J-\epsilon_L)^2}}{(\epsilon_J-\epsilon_L)^2}.
\end{align*}
\end{proposition}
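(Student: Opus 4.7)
Since $|\mathcal T|$ is a fixed finite number, a union bound over types $k \in \mathcal T$ and over scales $J' \geq J$ reduces the proof to an estimate of $\P^\eta(|\langle\eta(t)\rangle^k_{0,J'} - p_k| > \epsilon_J)$ for each pair $(k, J')$. I would then split each such probability, by the triangle inequality, into a \emph{bias} part $|\E^\eta[\langle\eta(t)\rangle^k_{0,J'}] - p_k|$ and a \emph{fluctuation} part $|\langle\eta(t)\rangle^k_{0,J'} - \E^\eta[\langle\eta(t)\rangle^k_{0,J'}]|$, and handle the two separately.

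\textbf{Bias.} For the bias I would use the standard stirring representation of the interchange process: each labelled particle marginally performs a continuous-time simple random walk of rate $2d\gamma$, so $\E^\eta[\mathds{1}_{T(\eta_y(t))=k}] = \sum_z p^\gamma_t(y,z)\,\mathds{1}_{T(\eta_z)=k}$ with $p^\gamma_t$ the SRW heat kernel. Averaging over $y \in B_{J'}$ yields
\[
\E^\eta[\langle\eta(t)\rangle^k_{0,J'}] - p_k \;=\; \sum_z \tilde Q(z)\,\bigl(\mathds{1}_{T(\eta_z)=k}-p_k\bigr), \qquad \tilde Q(z) := \frac{1}{|B_{J'}|}\sum_{y \in B_{J'}} p^\gamma_t(y,z),
\]
where $\tilde Q$ is a symmetric probability kernel concentrated on a ball of radius $R \sim \max(J', \sqrt{\gamma t}) \geq L^{3/2} \gg L$. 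A summation by parts over concentric shells centred at $0$, using the $L$-goodness bound $\bigl|\sum_{z \in B_r}(\mathds{1}_{T(\eta_z)=k}-p_k)\bigr| \leq c\, r^d \epsilon_L$ for $r \geq L$ (and the trivial bound $c\, r^d$ for $r < L$), together with standard heat-kernel estimates on $|\tilde Q(z) - \tilde Q(z+e)|$, produces a bias of order $O(\epsilon_L) + O(L^{-(d+1)/2})$. Under the hypothesis that $L(\epsilon_J - \epsilon_L)$ is large enough, this is in turn dominated by $\tfrac 12 (\epsilon_J - \epsilon_L)$. This is what Lemma \ref{lemma2} should provide.

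\textbf{Fluctuation, summation, and main obstacle.} The Gaussian-type concentration inequality of Lemma \ref{lemma1}, applied to the linear statistic $\langle\eta(t)\rangle^k_{0,J'}$ under the interchange dynamics, then supplies
\[
\P^\eta\Bigl(\bigl|\langle\eta(t)\rangle^k_{0,J'} - \E^\eta[\langle\eta(t)\rangle^k_{0,J'}]\bigr| > \tfrac 12 (\epsilon_J - \epsilon_L)\Bigr) \;\leq\; c_1\exp\bigl(-c_2 (J')^d (\epsilon_J - \epsilon_L)^2\bigr),
\]
and summing this over $J' \geq J$ by comparison with a $d$-dimensional Gaussian integral (changing variables $u = c_2 (J')^d (\epsilon_J - \epsilon_L)^2$) produces a prefactor of order $(\epsilon_J - \epsilon_L)^{-2}$ in front of $\exp(-c_2 J^d (\epsilon_J - \epsilon_L)^2)$; the outer union bound over the finite set $\mathcal T$ is absorbed into the constants. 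I expect the bias estimate to be the principal technical difficulty: converting the \emph{local} statement ``$0$ is $L$-good''---about ball averages at scales $\geq L$ centred at the origin---into a uniform smoothness bound on the heat-kernel-weighted average $\sum_z \tilde Q(z)\,\mathds{1}_{T(\eta_z)=k}$ requires both the smoothing condition $\gamma t > L^3$ (so the heat kernel varies slowly on scale $L$) and the hypothesis $L(\epsilon_J-\epsilon_L) \gg 1$ (so the $O(\epsilon_L)$ constant coming from summation by parts can be absorbed into $\tfrac12(\epsilon_J - \epsilon_L)$).
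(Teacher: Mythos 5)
Your proposal takes essentially the same route as the paper: a union bound over types $k\in\mathcal T$ and scales $J'\geq J$, then for each pair a split into a bias term controlled by the heat-kernel averaging of Lemma \ref{lemma2} and a fluctuation term controlled by the concentration inequality of Lemma \ref{lemma1}, with the $(\epsilon_J-\epsilon_L)^{-2}$ prefactor arising from the geometric tail of the sum in $J'$. One small slip in the write-up of the bias step: you claim the bias $|\E^\eta[\langle\eta(t)\rangle^k_{0,J'}]-p_k|$, of order $O(\epsilon_L)+O(L^{-(d+1)/2})$, is dominated by $\tfrac12(\epsilon_J-\epsilon_L)$; that is false as written, since $\epsilon_L$ can far exceed $\epsilon_J-\epsilon_L$ (e.g.\ when $J$ is close to $L$). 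What is actually needed, and what the paper does, is to offset the $\epsilon_L$ piece against the threshold $\epsilon_J$ itself: $\epsilon_J - (\epsilon_L + O(L^{-1}))\geq \tfrac12(\epsilon_J-\epsilon_L)$ once $L(\epsilon_J-\epsilon_L)$ is large, so only the $O(L^{-1})$ (or $O(L^{-(d+1)/2})$) error needs to be absorbed into $\tfrac12(\epsilon_J-\epsilon_L)$. Since that correction is already implicit in your own setup, this is a phrasing error rather than a gap in strategy.
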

%Note that $c_1$ and $c_2$ in the previous result depend on $N$ (SICURI CHE ANCHE LE c DIPENDONO DA N????).
%???????Proposition \ref{proposition1} can only be applied if one waits a time $ t\geq L^3/\gamma$. 
%
%%%%%%QUI C'E' L'EQUIVALENTE DELLA PROPOSIZIONE 2, CHE NON CI SERVE QUANDO TRATTIAMO IL CASO $\gamma$ GRANDE%%%%%%%
%
%The next proposition furnishes a control that holds for short times, too:
%
%\begin{proposition}[Equivalent of Proposition 2]\label{proposition2}
%Consider an initial $\eta\in\mathcal S^{\Z^d}$ and an $L\in\N$ such that
%\begin{align*}
%y\in G(\eta, L)\qquad \forall y\in B_L.
%\end{align*}
%Then there exists a constant $c>0$ independent of $L$ such that, for all $\epsilon>\epsilon_L$ and all $t\geq 0$, one has
%\begin{align*}
%P^\eta(\langle \eta(t)\rangle^k_{0,L}>(1+\epsilon) p_k)\leq 2{\rm e}^{-cL^dp_k^2(\epsilon-\epsilon_L)^2}
%\end{align*}
%MAYBE WE WILL NEED ALSO THE CONVERSE:
%and
%\begin{align*}
%P^\eta(\langle \eta(t)\rangle^k_{0,L}<(1-\epsilon) p_k)\leq 2{\rm e}^{-cL^dp_k^2(\epsilon-\epsilon_L)^2}
%\end{align*}
%\end{proposition}
%\noindent Observe that $\gamma$ plays no role in Proposition \ref{proposition2}.
%
In order to prove Propositions \ref{proposition1} %and \ref{proposition2} 
(which will be done in Subsection \ref{sec:proofs_of_the_propositions}) we will need the following %three 
two lemmas, that will be proven in Subsection \ref{prooflemma1} and Subsection \ref{prooflemma2}, respectively. Lemma \ref{lemma1} is a concentration-of-measure kind of statement, claiming that $\langle\eta(t)\rangle^k_{L}$ stays close to its mean. Lemma \ref{lemma2} controls the expectation $\E^\eta[\langle \eta(t)\rangle_{M}^k]$ in terms of the initial configuration $\eta$.
% and will be useful for large times. Finally Lemma \ref{lemma3} gives a control of the same average also for small $t$'s.

\begin{lemma}%[Equivalent of Lemma 1]
\label{lemma1}
There exists a constant $c>0$ such that, given $\eta\in\mathcal S^{\Z^d}$, $L\in\N$, %$x\in\Z^d$, 
$k\in\mathcal T$ and $a\geq 0$, 
\begin{align*}
\P^\eta\big(\big|\langle\eta(t)\rangle^k_{L}-\E^\eta[\langle\eta(t)\rangle^k_{L}]\big|\geq a\big)\leq 2\,{\rm e}^{-c\,a^2L^d}.
\end{align*}
%\begin{align*}
%P^\eta\big(\big|\langle\eta(t)-\bar\eta(t)\rangle^k_{x,L}\big|\geq a\big)\leq 2{\rm e}^{-ca^2L^d}.
%\end{align*}
\end{lemma}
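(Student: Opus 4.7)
The plan is to exploit a martingale structure driven by the Poisson clocks of the interchange process. Using the labelled-particle representation of the dynamics and writing $Z_\ell(s)$ for the position at time $s$ of the particle initially at $\ell\in\Z^d$, one has
\begin{equation*}
N_k:=|B_L|\,\langle\eta(t)\rangle^k_L=\sum_{\ell\in\Z^d}\mathds{1}_{T(\eta_\ell)=k}\,\mathds{1}_{Z_\ell(t)\in B_L}.
\end{equation*}
The key structural fact is that, although the different $Z_\ell$ are far from independent, each one has the \emph{marginal} law of a continuous-time simple random walk with generator $\gamma\,\Delta$ on $\Z^d$ (a characteristic feature of the interchange process, distinguishing it from the exclusion process). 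By linearity of expectation, the Doob martingale $M_s:=\E^\eta[N_k\mid\mathcal F_s]$ associated with the filtration $(\mathcal F_s)_{s\leq t}$ of the edge clocks therefore admits the explicit representation
\begin{equation*}
M_s=\sum_{z\in\Z^d}\mathds{1}_{T(\eta_z(s))=k}\,q_z(t-s),\qquad q_z(r):=\P\bigl(Z^{\mathrm{ctrw}}_z(r)\in B_L\bigr),
\end{equation*}
with $M_0=\E^\eta[N_k]$ and $M_t=N_k$.

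Next I would read off the jumps of $M$. When the clock on an edge $\{x,y\}$ rings at time $s$, only the terms at $z=x$ and $z=y$ change in the sum defining $M_s$, and a direct computation yields $|\Delta M_s|\leq|q_x(t-s)-q_y(t-s)|\leq 1$. The predictable quadratic variation of $M$ up to time $t$ is therefore at most $\gamma\int_0^t\sum_{\{x,y\}:\|x-y\|=1}(q_x(r)-q_y(r))^2\,dr$. Because $q_z(r)$ solves the discrete heat equation $\partial_r q=\gamma\,\Delta q$ on $\Z^d$, a discrete integration by parts gives the energy identity $\tfrac{d}{dr}\sum_z q_z(r)^2=-2\gamma\sum_{\{x,y\}}(q_x(r)-q_y(r))^2$; integrating from $0$ to $t$ and using $q_z(0)=\mathds{1}_{z\in B_L}$ then yields $\int_0^t\sum_{\{x,y\}}(q_x-q_y)^2\,dr\leq|B_L|/(2\gamma)$, hence $\langle M\rangle_t\leq|B_L|/2$.

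The proof then concludes by applying Freedman's inequality to $M_t-M_0=N_k-\E^\eta[N_k]$ with jump bound $1$ and quadratic-variation bound $|B_L|/2$. For $a\in[0,1]$ this produces
\begin{equation*}
\P^\eta\bigl(|N_k-\E^\eta N_k|\geq a|B_L|\bigr)\leq 2\exp\!\bigl(-c\,a^2|B_L|\bigr)\leq 2\exp\!\bigl(-c'a^2 L^d\bigr),
\end{equation*}
using $|B_L|\geq c_1 L^d$; for $a>1$ the statement is trivial since $|\langle\eta(t)\rangle^k_L-\E^\eta\langle\eta(t)\rangle^k_L|\leq 1$. Dividing by $|B_L|$ gives the lemma.

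The main obstacle is the rigorous identification of the explicit form of $M_s$: it relies on the marginal CTRW behaviour of each tagged particle in the interchange process, which must be verified carefully and which would fail, for example, for tagged particles in multi-type exclusion dynamics. Once this expression is secured, the remainder is a classical Dirichlet-form energy estimate for the heat kernel of the random walk combined with a textbook martingale concentration inequality.
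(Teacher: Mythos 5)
Your proof is correct, and it takes a genuinely different route from the paper's. The paper applies exponential Markov to $\langle\eta(t)\rangle^k_L$ and then controls the moment-generating function via Liggett's domination inequality, which compares the interchange dynamics to a system of \emph{independent} continuous-time random walks started from the type-$k$ sites; from there the bound follows by a direct computation with the resulting product structure and an optimization over the Chernoff parameter. You instead build the Doob martingale $M_s=\E^\eta[N_k\mid\mathcal F_s]$ with respect to the edge-clock filtration, compute it explicitly via the marginal CTRW property of tagged particles in the interchange process, bound its jumps and predictable quadratic variation through a Dirichlet-form energy identity for the discrete heat semigroup, and close with Freedman's inequality. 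Both arguments are sound and yield the same $2e^{-c\,a^2L^d}$ bound. The trade-off is instructive: the paper's route via Liggett's inequality is more robust and would apply essentially unchanged to an exclusion-type dynamics (which is exactly where \cite{HS15} used it), whereas your explicit formula $M_s=\sum_z\mathds 1_{T(\eta_z(s))=k}\,q_z(t-s)$ hinges on the tagged-particle marginal being a free random walk, a feature special to the interchange process, as you correctly note. On the other hand, your argument is self-contained (no external domination lemma), the energy identity $\int_0^t\sum_{\{x,y\}}(q_x-q_y)^2\,dr\le|B_L|/(2\gamma)$ is a clean way to see why the variance scale is $|B_L|$, and the martingale structure would also hand you time-uniform (maximal) versions of the concentration bound for free if they were ever needed. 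One small point of hygiene: when you invoke the marginal CTRW property conditionally on $\mathcal F_s$, you should state explicitly that the post-$s$ Poisson clocks are independent of $\mathcal F_s$ and that, tracking a single particle forward, it crosses each incident edge at rate $\gamma$ regardless of the rest of the configuration — this is the one-line justification that makes the conditional expectation formula rigorous.
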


\begin{lemma}%[Equivalent of Lemma 2]
\label{lemma2}
\label{le:lemma2}
For each $\alpha>0$, there exists a  constant $c>0$ such that, given $\eta\in\mathcal S^{\Z^d}$, $M,L\in\N$, $k\in\mathcal T$ and $t\geq 1$, the two following inequalities hold:
\begin{align}
\E^\eta\big[\langle \eta(t)\rangle_{M}^k\big]
	&\leq\Big(1+c\,\Big(\frac{L^{d+1}}{(\gamma t)^{(d+1)/{2}}}+\frac{1}{(\gamma t)^{1/2-\alpha }}\Big)\Big)
		\,\sup\Big\{   \langle \eta \rangle_{L'}^k:\,{L'\geq L} \Big\}\label{eqn:eta_upper_bound}\\
\E^\eta\big[\langle \eta(t)\rangle_{M}^k\big]	
	&\geq \Big(1-c\,\Big(\frac{L^{d+1}}{(\gamma t)^{(d+1)/{2}}}+\frac{1}{(\gamma t)^{1/2-\alpha }}\Big)\Big)
		\,\inf\Big\{   \langle \eta \rangle_{L'}^k:\,{L'\geq L} \Big\}.\label{eq:eta_lower_bound}
\end{align}
%\begin{align}\label{eqn:eta_upper_bound}
%\E^\eta\big[\langle \eta(t)\rangle_{M}^k\big]
%	\leq\Big(1+c\,\Big(\frac{L^{d+1}}{(\gamma t)^{(d+1)/2}}+\frac{L^{d}}{(\gamma t)^{{d}/{2}}}+\frac{1}{(\gamma t)^{1/2-\alpha }}\Big)\Big)
%		\sup\Big\{   \langle \eta \rangle_{L'}^k:\,{L'\geq L} \Big\}
%\end{align}
\end{lemma}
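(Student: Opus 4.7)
\textbf{Proof plan for Lemma \ref{lemma2}.} The plan is to reduce the environment expectation to a heat-kernel computation, and then argue by summation by parts. First I would use the graphical construction of the interchange process (independent Poisson clocks of rate $\gamma$ on the edges; when the clock of $\{x,y\}$ rings, swap the contents of $x$ and $y$): the trajectory $(Y^y_t)_{t\geq 0}$ of the particle initially at $y$ has the marginal law of a continuous-time symmetric random walk on $\Z^d$ with transition kernel $p_t$. By reversibility, the probability that the particle sitting at $x$ at time $t$ started at $y$ equals $p_t(x,y)$, so
\[
\E^\eta\bigl[\langle \eta(t)\rangle_M^k\bigr] \;=\; \sum_{y\in\Z^d} q_t(y)\,\mathds{1}_{T(\eta_y)=k}, \qquad q_t(y):=\frac{1}{|B_M|}\sum_{x\in B_M}p_t(x,y),
\]
with $\sum_y q_t(y) = 1$.

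Next I would rewrite the sum by Abel summation. For any radial non-increasing function $\tilde q:\N\to\R_+$, the layer-cake identity $\tilde q(\|y\|) = \sum_{R\geq\|y\|}(\tilde q(R)-\tilde q(R+1))$ gives
\[
\sum_y \tilde q(\|y\|)\,\mathds{1}_{T(\eta_y)=k} \;=\; \sum_{R\geq 0} w_R\,\langle \eta\rangle_R^k, \qquad w_R:=|B_R|\bigl(\tilde q(R)-\tilde q(R+1)\bigr)\geq 0,
\]
with $\sum_R w_R=\sum_y\tilde q(\|y\|)$. For the upper bound I would apply this with the radial majorant $\tilde q(R):=\sup_{\|z\|=R}q_t(z)$, and for the lower bound with the analogous radial minorant.

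For the upper bound, split at $R=L$. The contribution from $R\geq L$ is at most $\sup_{L'\geq L}\langle\eta\rangle_{L'}^k\cdot\sum_{R\geq L} w_R$. For $R<L$, the containment $B_R\subset B_L$ forces $|B_R|\langle\eta\rangle_R^k\leq|B_L|\langle\eta\rangle_L^k\leq|B_L|\sup_{L'\geq L}\langle\eta\rangle_{L'}^k$, and telescoping of $\tilde q(R)-\tilde q(R+1)$ produces
\[
\sum_{R<L} w_R\,\langle\eta\rangle_R^k\;\leq\; |B_L|\bigl(\tilde q(0)-\tilde q(L)\bigr)\,\sup_{L'\geq L}\langle\eta\rangle_{L'}^k.
\]
The classical gradient estimate $|p_t(x,y)-p_t(x',y)|\leq c\,\|x-x'\|/(\gamma t)^{(d+1)/2}$ for the discrete heat kernel transfers to $q_t$ and yields $\tilde q(0)-\tilde q(L)\leq cL/(\gamma t)^{(d+1)/2}$, which together with $|B_L|\leq cL^d$ produces the announced $cL^{d+1}/(\gamma t)^{(d+1)/2}$ term. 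The lower bound is symmetric: I would drop the non-negative $R<L$ terms and exploit $\sum_{R\geq L}w_R\geq \sum_R w_R - |B_L|(\tilde q(0)-\tilde q(L))$.

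Finally, the secondary error $(\gamma t)^{-(1/2-\alpha)}$ will enter through the non-radial defect $|\sum_R w_R-1|=|\sum_y(\tilde q(\|y\|)-q_t(y))|$. A local central limit theorem of Berry--Esseen type for the continuous-time SRW controls the sphere oscillation $\sup_{\|z\|=R}q_t(z)-\inf_{\|z\|=R}q_t(z)$ by $c/(\gamma t)^{(d+1)/2}$ uniformly in $R$ in the bulk, and summing over $R\lesssim\sqrt{\gamma t}$ (beyond which Gaussian tails are negligible) gives a total fluctuation of order $(\gamma t)^{d/2}/(\gamma t)^{(d+1)/2}=(\gamma t)^{-1/2}$, with the arbitrarily small $\alpha>0$ absorbing the polynomial local-CLT remainder and any logarithmic losses. \textbf{The hardest step} I expect is making this sphere-fluctuation estimate fully rigorous uniformly in $M$ and across all radial scales $R$: the gradient bound and the local CLT are classical, but stitching them together into a single estimate without wasteful logarithmic losses is precisely what forces the small $\alpha>0$ in the exponent.
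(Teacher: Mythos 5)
Your proposal follows essentially the same route as the paper: rewrite $\E^\eta[\langle\eta(t)\rangle_M^k]$ as $\sum_y p_M(t,y)\mathds{1}_{T(\eta_y)=k}$ with $p_M(t,y):=\tfrac{1}{|B_M|}\sum_{x\in B_M}p(t,x-y)$, replace $p_M$ by its radial majorant (minorant), perform Abel summation to express the sum through $|B_R|\,\langle\eta\rangle_R^k$ with non-negative weights, split at $R=L$, telescope the small-$R$ part into a gradient estimate of order $L^{d+1}/(\gamma t)^{(d+1)/2}$, and control the large-$R$ part through the total sphere-oscillation error giving $(\gamma t)^{-1/2+\alpha}$. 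Your identification of the ``hard step'' (the local-CLT control of the oscillation of $p_M(t,\cdot)$ over spheres, summed over radii) matches the paper's Claim~\ref{climatizzo}.

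One step you take for granted, which the paper has to establish separately, is that the radial majorant is actually non-increasing: you write ``for any radial non-increasing function $\tilde q$'' and then apply the identity to $\tilde q(R):=\sup_{\|z\|=R}q_t(z)$, but it is not obvious that this $\tilde q$ is non-increasing, and without monotonicity the weights $w_R$ need not be non-negative and the whole Abel-summation comparison collapses. The paper's Claim~\ref{claim:heatkernel_decrease} and Corollary~\ref{cor:crowns} (proved in the appendix via a parabolic maximum-principle/reflection argument applied to the ball-averaged kernel) provide exactly this: $p_M(t,y)\geq p_M(t,y+e_i)$ whenever $\langle y,e_i\rangle>0$, from which monotonicity of the crown-wise sup and inf over radii follows by constructing an axis-aligned path between crowns. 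You should add this verification; the rest of your outline is sound and tracks the paper's argument closely.
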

\begin{remark}
Lemma \ref{lemma1} provides a control of the average number of particles of a given type in a ball of size $M$ at time $t$ in terms of the initial particle configuration, for any $M\in\N$. It is not clear if the rates of decay in $t$ of the r.h.s.~of \eqref{eqn:eta_upper_bound} and of \eqref{eq:eta_lower_bound} are optimal:
While the term $(L/\sqrt{\gamma t})^{d+1}$ is  due to the diffusive dissipation of traps, the term $1/(\gamma t)^{1/2-\alpha}$ is present for technical reasons and it could perhaps be   improved.
% and it is not clear if one can completely get rid of it. Furthermore, at the cost of longer technicalities, our proof can be refined  to bring the exponent $1/2-\alpha$ up to $1/2$.
%
% In particular, we believe that the method in our proof could be refined 
%%(e.g., by taking crowns of a smaller size, see the beginning of subsection \ref{prooflemma2})
% to bring the exponent $1/2-\alpha$ up to $1/2$, at the cost of a longer and more technical   proof. 
 Nevertheless, not being interested here in sharp quantitative statements on $\gamma$, the  bounds \eqref{eqn:eta_upper_bound} and \eqref{eq:eta_lower_bound} will be more than sufficient for our purpose.
\end{remark}
%We will later use Lemma \ref{le:lemma2} for large value of $t$, so that the following weaker corollary will be enough: 
%\begin{corollary}\label{meglio}
%For each $\alpha>0$, there exists a constant $c<\infty$ such that, given $\eta\in\mathcal S^{\Z^d}$, $M,L\in\N$, $k\in\mathcal T$ and $\sqrt{\gamma t}\geq L$, 
%\begin{align}\label{eqn:eta_upper_bound}
%\E^\eta[\langle \eta(t)\rangle_{0,M}^k]
%	&\leq\Big(1+c\,\Big(\frac{L}{(\gamma t)^{1/2-\alpha}}\Big)\Big)
%		\sup\Big\{   \langle \eta \rangle_{0,L'}^k:\,{L'\geq L} \Big\}
%\end{align}
%and 
%\begin{align}\label{eq:eta_lower_bound}
%\E^\eta[\langle \eta(t)\rangle_{0,M}^k]	
%	&\geq \Big(1-c\,\Big(\frac{L}{(\gamma t)^{1/2-\alpha}}\Big)\Big)
%		\inf\Big\{   \langle \eta \rangle_{0,L'}^k:\,{L'\geq L} \Big\}.
%\end{align}
%
%\end{corollary}

%%%%%%QUI C'E' L'EQUIVALENTE DEL LEMMA 3, CHE NON CI SERVE QUANDO TRATTIAMO IL CASO $\gamma$ GRANDE%%%%%%%
%
%\begin{lemma}[Equivalent of Lemma 3]\label{lemma3}
%For each $\eta\in\mathcal S^{\Z^d}$, $L\in\N$, $x\in\Z^d$, $k\in\mathcal T$ and $t\geq 0$ it holds
%\begin{align}\label{eqn:eta_upper_bound}
%\E^\eta[\langle \eta(t)\rangle_{x,M}^k]
%	\leq		\sup\Big\{   \langle \eta \rangle_{y,L'}^k:\,{L'\geq L},\, y\in B_L(x) \Big\}.
%\end{align}
%\end{lemma}

\subsection{Proof of Lemma \ref{lemma1}}\label{prooflemma1}
%\begin{proof}[Proof of Lemma \ref{lemma1}]
%By translation invariance we can take $x=0$. For simplicity, we call $\langle\eta(t)\rangle^k_{L}:=\langle\eta(t)\rangle^k_{0,L}$.
We prove the two following inequalities, which imply the lemma:
\begin{align}
&\P^\eta\big(\langle\eta(t)\rangle^k_{L}-\E^\eta[\langle\eta(t)\rangle^k_{L}]\geq a\big)
	\leq {\rm e}^{-ca^2L^d}\label{prima},\\
&\P^\eta\big(\langle\eta(t)\rangle^k_{L}-\E^\eta[\langle\eta(t)\rangle^k_{L}]\leq -a\big)
	\leq {\rm e}^{-ca^2L^d}.\label{seconda}
\end{align}

%\begin{align*}
%P^\eta(\langle\eta(t)-\bar\eta(t)\rangle^k_{x,L}\geq a)\leq {\rm e}^{-ca^2L^d}.
%\end{align*}
 %We repeat here the main steps for completeness.

Let us start with \eqref{prima}. We will replicate the argument used in \cite[Lemma 1]{HS15}, considering each $k$-particle of our system as a $1$-particle in \cite{HS15}. Even if there are only few minor modifications, we repeat here the whole proof in order to make the present article self-contained. For $0<\delta<|B_L|$, exponential Markov inequality gives
\begin{align}\label{eqn:markov}
\P^\eta\big(\langle\eta(t)\rangle^k_{L}-\E^\eta[\langle\eta(t)\rangle^k_{L}]\geq a\big)
	\leq {\rm e}^{-\delta a}\cdot{\rm e}^{-\delta\E^\eta[\langle\eta(t)\rangle^k_{L}]}\cdot
		\E^\eta\big[{\rm e}^{\delta \langle\eta(t)\rangle^k_{L}}\big].
\end{align}
Notice that we can write 
\begin{align}\label{eqn:meanofeta}
\E^\eta\big[\langle\eta(t)\rangle^k_{L}\big]
	&=\frac{1}{|B_L|}\sum_{y\in B_L}\E^\eta[\mathds{1}_{T(\eta_{y}(t))=k}] \nonumber\\
	 &=\frac{1}{|B_L|}\sum_{y\in B_L}\sum_{z\in\Z^d}p(t,y-z)\mathds{1}_{T(\eta_{z})=k} 
	= \sum_{z\in\Z^d}p_L(t,z)\mathds{1}_{T(\eta_{z})=k},
\end{align}
%\begin{align}\label{eqn:meanofeta}
%\langle\bar\eta(t)\rangle^k_{x,L}
%	&=\frac{1}{|B_L(x)|}\sum_{y\in B_L(0)}\mathds{1}_{T(\bar\eta_{x+y}(t))=k} 
%	  =\frac{1}{|B_L(x)|}\sum_{y\in B_L(0)}\sum_{z\in\Z^d}p(t,x+y-z)\mathds{1}_{T(\eta_{z})=k} 
%	  	\nonumber\\
%	&= \sum_{z\in\Z^d}p_L(t,x-z)\mathds{1}_{T(\eta_{z})=k},
%\end{align}
where $p(t,x)$ is the heat kernel associated to the Laplacian $\gamma\Delta$, i.e.~the probability that a simple random walk jumping at rate $\gamma$ and starting at the origin will be in site $x$ after time $t$, and 
\begin{align}\label{piel}
p_L(t,x):=\frac{1}{|B_L|}\sum_{y\in B_L}p(t,x+y)\,.
\end{align}
 
The third factor of \eqref{eqn:markov} can be handled through Ligget's inequalities (see \cite[Prop.~1.7, pag.~366]{L05}% see again \cite{HS15} for a more thorough discussion
): Let $\theta=(\theta(t))_{t\geq 0}$ be the collective motion of continuous-time independent simple random walks on $\Z^d$ with rate $\gamma$,  so that $\theta_x(t)$ indicates the number of particles on site $x$ at time $t$. We consider as starting configuration $\theta(0)$ the one such that $\theta_x(0)=\mathds{1}_{T(\eta_x)=k}$, i.e.~we let a simple random walk start in site $x$ if and only if the particle at site $x$ in $\eta$ is of type $k$. We arbitrarily label these random walks and call them $(X_j(t))_{t\geq 0}$, with $j\in\N$. 
We indicate the empirical average of the number of particles at time $t$ in a box of size $L$ around the origin as $\langle\theta(t)\rangle_{L}$. Set $\pbold_L(t,x):=|B_L|\cdot p_L(t,x)$.
Using Liggett's inequality for the first line, we can now bound
\begin{align}\label{eqn:thirdfactor}
\E^\eta\big[{\rm e}^{\delta \langle\eta(t)\rangle^k_{L}}\big]
	&\leq \E^\eta\big[{\rm e}^{\delta \langle\theta(t)\rangle_{L}}\big]
	=\prod_{j\in\N}\E^\eta\big[{\rm e}^{\frac\delta{|B_L|} \mathds{1}_{X_j(t)\in B_L}}\big]				\nonumber\\
	&=\prod_{j\in\N}\Big({\rm e}^{\frac\delta{|B_L|}} \pbold_L(t,X_j(0))
		+\big(1- \pbold_L(t,X_j(0))\big)\Big)\nonumber\\
	&=\prod_{z\in\Z^d}\Big({\rm e}^{\frac\delta{|B_L|}} \pbold_L(t,z)
		+(1- \pbold_L(t,z))\Big)^{\indc_{T(\eta_z)=k}}\nonumber\\
	&\leq \exp\Big\{  ({\rm e}^{\frac\delta{|B_L|}}-1)\sum_{z\in\Z^d}\pbold_L(t,z)					\indc_{T(\eta_z)=k}\Big\}\nonumber\\
%	&=\exp\Big\{({\rm e}^{\frac\delta{|B_L(0)|}}-1)|B_L(0)|\langle\bar\eta(t)\rangle_{x,L}^k\Big\},
	&=\exp\Big\{({\rm e}^{\frac\delta{|B_L|}}-1)\,|B_L|\,\E^\eta[\langle\eta(t)\rangle^k_{L}]\Big\}\,,
\end{align}
where the last equality follows from \eqref{eqn:meanofeta}.
Since $\delta/|B_L|\leq 1$, we can expand ${\rm e}^{\frac\delta{|B_L|}}-1$ to the first order in \eqref{eqn:thirdfactor} and find a constant $C<\infty$ such that
$$
 \E^\eta\big[{\rm e}^{\delta \langle\eta(t)\rangle^k_{L}}\big]
	\leq {\rm e}^{\delta \E^\eta[\langle\eta(t)\rangle^k_{L}]+C\frac{\delta^2}{|B_L|} \E^\eta[\langle\eta(t)\rangle^k_{L}]}\,.
$$
Plugging this back into \eqref{eqn:markov}, we get%, for $0\leq\delta\leq |B_L|$,
\begin{align*}
\P^\eta\big(\langle\eta(t)\rangle^k_{L}-\E^\eta[\langle\eta(t)\rangle^k_{L}]\geq a\big)
	\leq  {\rm e}^{-a\delta+C\frac{\delta^2}{|B_L|} \E^\eta[\langle\eta(t)\rangle^k_{L}]}\,.
\end{align*}
Minimizing this expression in $\delta\in[0,|B_L|]$ (see \cite[Proof of Lemma 2.3]{HS15} for more details) and remembering that $ c_1 L^d\leq|B_L|\leq  c_2 L^d$, we obtain the claim.

\smallskip

We are left to show \eqref{seconda}.
To this end, we just consider all the particles of type $j\not = k$ as being of the same type. We call $\langle\eta(t)\rangle^{\hat k}_{L}:=1-\langle\eta(t)\rangle^k_{L}$ the empirical average of particles of type different form $k$ in $B_L$ and notice that
\begin{align*}
\P^\eta\big(\langle\eta(t)\rangle^k_{L}-\E^\eta[\langle\eta(t)\rangle^k_{L}]\leq -a\big)
	=\P^\eta\big(\langle\eta(t)\rangle^{\hat k}_{L}-\E^\eta[\langle\eta(t)\rangle^{\hat k}_{L}]\geq a\big).
\end{align*}
Then we just have to replay exactly the same game as above.
%\end{proof}

\medskip

%%%%%%%%%%%%%%%%%%%%%%%%%%%%%%%%%%%%%%%%%%%%%%%%

\subsection{Proof of Lemma \ref{lemma2}}\label{prooflemma2}

%\begin{proof}[Proof of Lemma \ref{lemma2}]
Recall the definition \eqref{piel}. We divide $\Z^d$ in disjoint crowns centered in $0$: $\Z^d=\bigcup_{n\in\N}C_n$ with $C_n:=\{z\in\Z^d:\, n-1< \|z\|\leq n\}$.
For $y\in\Z^d$ we define the crown of $y$ as $C(y):=C_{n(y)}$ where $n(y)$ is the unique integer such that $y\in C_{n(y)}$.
For each $n\in\N$ we choose representatives $\hat x_n,\,\check x_n\in C_n$ such that 
\begin{align*}
p^+_M(t,n):=p_M(t,\hat x_n)=\max_{z\in C_n}\{p_M(t,z)\}\,,
\quad\mbox{}
p^-_M(t,n):=p_M(t,\check x_n)=\min_{z\in C_n}\{p_M(t,z)\}.
\end{align*}
%and, for simplicity, all the coordinates of $\hat x_n,\,\check x_n\in C_n$ have the same sign.
We also define, for $y\in\Z^d$, % and $n\in\N_0$,
%\begin{align*}
$p^+_M(t,y):=\max_{z\in C(y)}\{p_M(t,z)\}$ and
%\quad\mbox{and}\quad
$p^-_M(t,y):=\min_{z\in C(y)}\{p_M(t,z)\}.
%p^+_M(t,n):=\max_{z\in C_n}\{p_M(t,z)\},\\
%p^-_M(t,y):=\min_{z\in C(y)}\{p_M(t,z)\}
%\quad&\mbox{and}\quad
%p^-_M(t,n):=\min_{z\in C_n}\{p_M(t,z)\}.
$
%\end{align*}

\begin{claim}\label{claim:heatkernel_decrease}
Take $y\in\Z^d$ such that $\langle y,e_i\rangle>0$ for some $i\in \mathcal I$. Then
\begin{align*}
p_M(t,y)\geq p_M(t,y+e_i).
\end{align*}
\end{claim}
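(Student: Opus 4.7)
By the symmetry of $\mathcal I$ and the identity $p_M(t,y)=p_M(t,-y)$ (which follows from $p(t,\cdot)$ being even and $B_M$ being symmetric), we may assume $i$ is a positive index, so that $e_i$ is a standard basis vector of $\R^d$ and the hypothesis becomes $y_i:=\langle y,e_i\rangle\geq 1$. Recalling $|B_M|\,p_M(t,y)=\sum_{u\in y+B_M}p(t,u)$, we write
\begin{equation*}
|B_M|\bigl(p_M(t,y)-p_M(t,y+e_i)\bigr)=\sum_{u\in y+B_M}p(t,u)-\sum_{u\in y+e_i+B_M}p(t,u).
\end{equation*}
The strategy is a reflection argument across the hyperplane $H:=\{u\in\R^d:u_i=y_i+\tfrac12\}$, the perpendicular bisector of the segment joining the centres $y$ and $y+e_i$ of the two balls.

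Let $R$ denote the reflection across $H$. Since $y_i+\tfrac12$ is a half-integer, $R$ preserves $\Z^d$; a short computation gives $R(y+v)=y+e_i+\sigma_i(v)$, where $\sigma_i$ flips the $i$-th coordinate. Because $B_M$ is $\sigma_i$-invariant, $R$ is an involution sending $y+B_M$ bijectively onto $y+e_i+B_M$, and so it restricts to a bijection between the two pieces of the symmetric difference. After cancellation of the intersection, the display above reduces to
\begin{equation*}
\sum_{u\in(y+B_M)\setminus(y+e_i+B_M)}\bigl(p(t,u)-p(t,R(u))\bigr).
\end{equation*}
For every $u$ in the indexing set one has $u_i\leq y_i$ (the origin lies on this side of $H$, and comparing $\|u-y\|$ with $\|u-y-e_i\|$ forces $u_i<y_i+\tfrac12$), while $R(u)_i=2y_i+1-u_i\geq y_i+1$ and the remaining coordinates of $u$ and $R(u)$ coincide. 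A trivial case split on the sign of $u_i$, using $y_i\geq 1$, then yields $|u_i|\leq|R(u)_i|$.

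It remains to establish the pointwise monotonicity $p(t,u)\geq p(t,R(u))$. Invoking Poisson thinning, the jump counts of the walker in each of the $2d$ directions are independent Poisson random variables of parameter $\gamma t/(2d)$, whence the coordinates of the walker at time $t$ are independent and $p(t,u)$ factorises as $\prod_{j=1}^{d}q_t(u_j)$, with $q_t$ the common Skellam marginal. Since $q_t$ is even and non-increasing in $|\cdot|$ (equivalently, the classical monotonicity $I_0>I_1>I_2>\cdots$ of modified Bessel functions of the first kind, which can alternatively be proved by a one-dimensional strong Markov/reflection argument exploiting that the walk is nearest-neighbour), the bound $|u_i|\leq|R(u)_i|$ together with equality of the other coordinates gives $p(t,u)\geq p(t,R(u))$, closing the proof. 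The reflection bookkeeping is elementary, and the only substantive ingredient is this last unimodality of the one-dimensional marginal.
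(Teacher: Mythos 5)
Your proof is correct, but it takes a genuinely different route from the paper. The paper's argument is a discrete maximum--principle one: it observes that the two relevant properties of the initial datum $p_M(0,\cdot)=|B_M|^{-1}\mathds{1}_{B_M}$ (invariance under the sign flip $\sigma_i$ and monotone decay away from the hyperplane $\{x_i=0\}$) are propagated by the semigroup $e^{\gamma t\Delta}$, by checking that the difference $g(t,x)=f(t,x)-f(t,x+e_i)$ has nonnegative time derivative wherever it touches zero. Your argument is instead combinatorial--analytic: you cancel the overlap of the two shifted balls by reflecting across the mid-hyperplane $\{u_i=y_i+\tfrac12\}$, reduce the claim to the pointwise inequality $p(t,u)\geq p(t,R(u))$ for $u$ on the near side, and discharge that via the tensor--product form of the kernel (independence of the coordinates of the jump process after Poisson thinning) together with unimodality of the one--dimensional Skellam marginal, i.e.\ $I_0>I_1>I_2>\cdots$. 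The trade-off is clear: your proof is completely explicit and self-contained modulo a single classical special-function fact, and it makes visible exactly which terms of the sum survive, whereas the paper's proof is softer and more robust---it never uses the product structure of the kernel and would apply verbatim to any symmetric nearest-neighbour generator that commutes with $\sigma_i$, at the price of a maximum-principle step whose rigor (positivity preservation for the semi-discrete heat equation on an infinite graph) is stated somewhat informally. One small caveat: the parenthetical suggestion that the Bessel monotonicity $I_k>I_{k+1}$ follows from a one-dimensional reflection/strong-Markov argument is not quite right as stated---reflection across an integer level only compares lattice points of the same parity, whereas $u_i$ and $R(u)_i=2y_i+1-u_i$ have opposite parities---but this is an aside; the cited classical fact is true and suffices, and can for instance be obtained by conditioning the Skellam variable on the total number of jumps and using the unimodality of the binomial coefficients.
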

\begin{corollary}[Corollary of the Claim]\label{cor:crowns}
\begin{align*}
p^+_M(t,y)\geq p^+_M(t,y')\quad \mbox{and} \quad p^-_M(t,y)\geq p^-_M(t,y')
\qquad \forall y, y'\in\Z^d:\,\|y'\|\geq \|y\|.
\end{align*}
\end{corollary}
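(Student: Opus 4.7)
The plan is first to upgrade Claim \ref{claim:heatkernel_decrease}, together with the sign-flip and coordinate-permutation symmetries of $p(t,\cdot)$ and of $B_M$, to a full componentwise monotonicity of $p_M(t,\cdot)$: namely, if $|z_i|\leq|z'_i|$ for every $i\in\{1,\dots,d\}$, then $p_M(t,z)\geq p_M(t,z')$. The only case not immediately covered by iterating the Claim along a coordinate-aligned path is the increment of a zero coordinate; this will be handled via the sign-flip symmetry together with the max-at-origin property $p(t,0)=\max_{x\in\Z^d}p(t,x)$ of the symmetric simple random walk heat kernel. Since $n(y)=\lceil\|y\|\rceil$ and the quantities $p^\pm_M(t,y)$ depend on $y$ only through $n(y)$, the corollary reduces to showing that the functions $n\mapsto\max_{z\in C_n}p_M(t,z)$ and $n\mapsto\min_{z\in C_n}p_M(t,z)$ are non-increasing on the integers $n\geq 0$.

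For the $p^+_M$ inequality I take a maximizer $\hat z\in C_{n'}$ and, by the symmetry above, replace it by $(|\hat z_1|,\dots,|\hat z_d|)$, so without loss of generality $\hat z\in\Z^d_{\geq 0}$. I then shrink it to the origin along a greedy lattice path $\hat z=z^{(0)},z^{(1)},\dots,z^{(K)}=0$ in $\Z^d_{\geq 0}$, decreasing at each step the smallest positive coordinate, denoted $m_k$, so that $\|z^{(k+1)}\|^2=\|z^{(k)}\|^2-(2m_k-1)$. Letting $s$ be the last index with $\|z^{(s)}\|^2>(n-1)^2$, I claim $z^{(s)}\in C_n$: if not, $\|z^{(s)}\|^2\geq n^2+1$ and the jump satisfies $2m_s-1\geq n^2+1-(n-1)^2=2n$, so $m_s\geq n+1$; on the other hand the bounds $m_s^2\leq\|z^{(s)}\|^2\leq(n-1)^2+2m_s-1$ give $(m_s-1)^2\leq(n-1)^2$, hence $m_s\leq n$, a contradiction. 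Since $z^{(s)}\leq\hat z$ componentwise, the monotonicity above yields
$$
p_M(t,z^{(s)})\geq p_M(t,\hat z)=\max_{z\in C_{n'}}p_M(t,z),
$$
and as $z^{(s)}\in C_n$ this proves the first inequality.

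For $p^-_M$ I run the symmetric argument, reversing the direction of the path: for each $z\in C_n$ (taken with $z\geq 0$ by symmetry) I grow $z$ by adding $e_i$ at the smallest coordinate at every step, with squared-norm increments $2m_k+1\geq 1$. A parallel contradiction forbids the path from jumping over $C_{n'}$, since a skip would force $2m_s+1\geq 2n'$, hence $m_s\geq n'$; but $m_s$ is the minimum coordinate of $z^{(s)}$, so all its coordinates are $\geq n'$ and $\|z^{(s)}\|^2\geq (n')^2$, contradicting $\|z^{(s)}\|^2\leq (n'-1)^2$. The resulting $z'\in C_{n'}$ satisfies $z\leq z'$ coordinatewise, so $p_M(t,z)\geq p_M(t,z')\geq\min_{z'\in C_{n'}}p_M(t,z')$; taking the infimum over $z\in C_n$ finishes the argument. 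The main technical subtlety throughout is that the integer-valued squared norm may a priori jump over the target crown; the two contradiction arguments above, which exploit the extremal coordinate condition imposed by the greedy choice, are precisely what rule this out.
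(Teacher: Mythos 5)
Your argument takes a genuinely different (and in one respect more careful) route than the paper's. The paper descends from $\hat x_{n'}$ to a point of $C_n$ by moving along a single coordinate direction $e$ with $\langle \hat x_{n'},e\rangle>0$ and applying Claim~\ref{claim:heatkernel_decrease} step by step; but a single direction need not reach $C_n$ (for instance, in $d=2$ no axial descent from the diagonal point $(7,7)\in C_{10}$ enters $C_6$), so a direction-changing path is actually required, and your greedy construction makes this explicit. You first upgrade the Claim to a full coordinatewise monotonicity of $p_M$, then descend (resp.~ascend) along the greedy path that shrinks the smallest positive coordinate (resp.~grows the smallest coordinate), and you show the path cannot overshoot the target crown by a two-sided squeeze on the greedy coordinate $m_s$: the jump forces $m_s\geq n+1$ (resp.~$m_s\geq n'$), while $m_s^2\leq \|z^{(s)}\|^2\leq (n-1)^2+2m_s-1$ (resp.~$(n')^2\le\|z^{(s)}\|^2\le (n'-1)^2$) forces the opposite. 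I checked both squeezes and they are correct; what the paper's move-along-one-axis argument buys in brevity, yours buys in actually being airtight.

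There is, however, a gap in the monotonicity upgrade. You also need $p_M(t,z)\geq p_M(t,z+e_i)$ when $z_i=0$ — it is invoked each time a greedy descent drops a coordinate from $1$ to $0$ — and you propose to get it from the sign-flip symmetry combined with the global maximum $p(t,0)=\max_x p(t,x)$. That identity only controls $p$ at the origin and gives no information about $p$, let alone $p_M$, along a coordinate line through a nonzero point; an even function that decreases on $\{s\geq 1\}$ could still peak at $s=\pm1$ rather than at $s=0$. What you actually need is coordinate-line unimodality, and there are two honest ways to supply it: either use that the continuous-time walk with generator $\gamma\Delta$ factors into independent one-dimensional walks, so the one-dimensional statement $p_1(t,s)\geq p_1(t,s+1)$ for $s\geq 0$ gives $p(t,(w',s))\geq p(t,(w',s+1))$ for every fixed profile $w'$, whence summing over the symmetric slices of $B_M(z)$ in coordinate $i$ yields the desired inequality for $p_M$; or observe that the proof of Claim~\ref{claim:heatkernel_decrease} in Subsection~\ref{appendix1} already propagates the inequality for $\langle x,e_i\rangle\geq 0$ and not merely $>0$, since the initial data $p_M(0,\cdot)$ satisfies it on the hyperplane $\{\langle x,e_i\rangle=0\}$ and the sign-flip symmetry makes the ``$\partial_t g\geq 0$ at zeros of $g$'' step go through there as well. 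Either patch closes the gap; as stated, your citation does not.
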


\smallskip

\noindent We postpone the easy proofs of Claim \ref{claim:heatkernel_decrease} and Corollary \ref{cor:crowns} to, respectively, Subsection \ref{appendix1} and Subsection \ref{appendix2} in the Appendix.

\smallskip

With Corollary \ref{cor:crowns} at hand, we would like to proceed in a similar fashion as in \cite{HS15}, but the  high dimensionality represents an obstacle to this end as, due to the geometry of the lattice, the particles starting in a given crown do not have exactly the same probability to reach the ball of radius $M$ at a given time. For each positive function $u:\Z^d\to\R$, we can write
\begin{align}\label{eq:sonice}
p_M(t,y)u(y)\leq p^+_M(t,y)u(y)=\sum_{n\in\N}\big(p^+_M(t,n)-p^+_M(t,{n+1})\big)u(y)\mathds{1}_{\|y\|\leq n}.
\end{align}
If we sum over all $y\in\Z^d$, take $u(y)= \mathds{1}_{T(\eta_y)=k}$ and 
use \eqref{eqn:meanofeta}, % that
%$\E^\eta[\langle \eta(t)\rangle_{x,M}^k]= \sum_{y\in\Z^d}p_M(t,x-y)\mathds{1}_{T(\eta_{y})=k}$, 
we obtain
\begin{align}\label{eq:boundetabar}
\E^\eta[\langle \eta(t)\rangle_{M}^k]
	&\leq \sum_{n\in\N}\big(p^+_M(t,n)-p^+_M(t,{n+1})\big)
		\sum_{y\in\Z^d}\mathds{1}_{T(\eta_y)=k}\mathds{1}_{\|y\|\leq n}\nonumber\\
	&= \sum_{n\in\N}\big(p^+_M(t,n)-p^+_M(t,{n+1})\big)|B_n| \langle \eta\rangle_{n}^k.\end{align}

We abbreviate $R:= \sup\{   \langle \eta \rangle_{L'}^k:\,{L'\geq L} \}$ and split the final sum in \eqref{eq:boundetabar} for $n<L$ and $n\geq L$.
For $n<L$ we have that $|B_n| \langle \eta\rangle_{n}^k\leq |B_L|R$, since the r.h.s.~is bigger than the total number of particles of type $k$ at time $0$ in the ball of radius $L$ and this clearly dominates the number of particles in any smaller ball. Hence,
%\begin{align}\label{eq:n_smaller_L}
%\sum_{n< L}(p^+_M(t,n)-p^+_M(t,{n+1}))|B_n| \langle \eta\rangle_{n}^k
%	&\leq p^+_M(t,0)|B_L|R \leq \frac{cL^{d}}{(\gamma t)^{d/2}}R\,,
%\end{align}
%where we have used the fact that $p^+_M(t,0)\leq \frac{1}{(\gamma t)^{d/2}}$.
\begin{align}\label{eq:n_smaller_L}
\sum_{n< L}\big(p^+_M(t,n)-p^+_M(t,{n+1})\big)|B_n| \langle \eta\rangle_{n}^k
	&\leq \big(p^+_M(t,0)-p^+_M(t,L)\big)|B_L|R %\nonumber\\
	\leq \frac{cL^{d+1}}{(\gamma t)^{\frac{d+1}2}}R.
\end{align}
%Here and below, $c,c_1,c_2>0$ are constants the value of which might change from one expression to the other, but that will never depend on the other variables.
To see why the last inequality holds, we expand, for any $z\in\Z^d$ and $t>0$, in Fourier variables $p(t,z)={\rm e}^{\gamma t\Delta}\delta_0(z)$. We call $\alpha(\xi):=\sum_{j=1}^d2(\cos(2\pi\xi\cdot e_j)-1)$ and bound, for each $z\in\Z^d$, $e\in\mathcal N$, $t>0$,
\begin{align}\label{eq:fourier}
\big|p(t,z)-p(t,z+e)\big |
	&=\Big|\int_{\R^d}\frac{{\rm e}^{\gamma t \alpha(\xi)}}{(2\pi)^d}{\rm e}^{i\xi\cdot z}
		(1-{\rm e}^{i\xi\cdot e})\,{{\rm d}\xi}\Big|
	\leq c \,\int_{\R^d}|\xi|{\rm e}^{-c_1\gamma t|\xi|^2}{\rm d}\xi 
	\leq \frac{c_2}{(\gamma t)^{\frac{d+1}{2}}}\,.
	\end{align}
 Then we are done, since \eqref{eq:fourier} is uniform in $z$ and $e$ and since we can find a path of neighbors leading from any point $y\in B_M$ to the point $\hat x_L+y$ in $\sqrt dL$ steps, which gives $p^+_M(t,0)-p^+_M(t,L)\leq cL/(\gamma t)^{(d+1)/2}.$
%\frac{cL}{(\gamma t)^{(d+1)/2}}

%\rosso{Rivedere fino qui}
%, and therefore
%\begin{align*}
%\sum_{n< L}(Q_M(t,C_n)-Q_M(t,C_{n+1}))|B_n| \langle \eta\rangle_{0,n}^k
%	\leq \frac{cL^{d+1}}{(\gamma t)^{(d+1)/2}},
%\end{align*}
%for some other constant $c>0$.

\smallskip

We want to show now that
\begin{align}\label{colorado}
\sum_{n\geq L}\big(p^+_M(t,n)-p^+_M(t,{n+1})\big)|B_n| \langle \eta\rangle_{n}^k
	&\leq \Big(1+c\,\Big(\frac{L^{d}}{(\gamma t)^{\frac{d+1}{2}}}+\frac{1}{(\gamma t)^{\frac 12-\alpha}}\Big)\Big)\,R\,.
\end{align}
This together with \eqref{eq:n_smaller_L} and \eqref{eq:boundetabar} will imply the first half of the lemma, inequality \eqref{eqn:eta_upper_bound}.
For $n\geq L$  we have that $\langle \eta\rangle_{n}^k\leq R$ by definition and we are left to bound $\sum_{n\geq L}(p^+_M(t,n)-p^+_M(t,{n+1}))|B_n|$.
By summing by parts we have
\begin{align}\label{eq:valid_for_Qq}
\sum_{n\geq L}\big(&p^+_M(t,n)-p^+_M(t,{n+1})\big)|B_n|
	=\sum_{n\geq L} p^+_M(t,n)|C_n|+p^+_M(t,L)|B_{L-1}|\nonumber\\
%	&=\sum_{n\geq L} \sum_{y\in C_n}
	&=\sum_{y\not\in B_{L-1}}
	p_M(t,y)
		+\sum_{n\geq L} \sum_{y\in C_n}\big(p^+_M(t,n)-p_M(t,y)\big)
		+p^+_M(t,L)|B_{L-1}|\,.
%	&\leq 1+\sum_{n\geq L}E_M(t,n)|C_n|+ p^+_M(t,L)|B_{L-1}|\,,
\end{align}
Since by Corollary \ref{cor:crowns} we have, for each $y\in C_m$ with $m<L$, that $p_M(t,y)\geq p_M^-(t,m)\geq p^-_M(t,L)$, we can bound 
$
p^+_M(t,L)|B_{L-1}|\leq \big(p^+_M(t,L)-p^-_M(t,L)\big)|B_{L-1}|+\sum_{y\in B_{L-1}} p_M(t,y)\,,
$
which inserted into \eqref{eq:valid_for_Qq} gives
\begin{align}\label{napoleon}
\sum_{n\geq L}\big(&p^+_M(t,n)-p^+_M(t,{n+1})\big)|B_n|
	\leq 1+\sum_{n\geq L}E_M(t,n)|C_n|+E_M(t,L)|B_{L-1}|\,,
\end{align}
where we have used the fact that $\sum_{y\in\Z^d}p_M(t,y)=1$ and where we have called 
$$
E_M(t,n):=p_M(t,\hat x_n)-p_M(t,\check x_n)
$$ 
the maximum error we commit in the $n$-th crown. 
%*************************
%
%\begin{align}\label{eq:valid_for_Qq}
%\sum_{n\geq L}(&p^+_M(t,n)-p^+_M(t,{n+1}))|B_n|
%	=\sum_{n\geq L} p^+_M(t,n)|C_n|+p^+_M(t,L)|B_{L-1}|\nonumber\\
%	&=1+\sum_{n\geq L}\sum_{y\in C_n}(p^+_M(t,n)-p_M(t,y))
%		+\sum_{n< L}\sum_{y\in C_n}(p^+_M(t,L)-p_M(t,y)),
%\end{align}
%where for the second line we have used that fact that
%\begin{align*}
%1
%	&=\sum_{n\in\N}\sum_{y\in C_n}p_M(t,y)\\
%	&=\sum_{n\geq L}\sum_{y\in C_n}p_M(t,y)+
%		\sum_{n< L}\sum_{y\in C_n}(p_M(t,y)-p^+_M(t,y)) +p^+_M(t,L)|B_{L-1}|.
%\end{align*}
%
%Calling the maximum error we commit in the $n$-th crown $E_M(t,n):=p_M(t,\hat x_n)-p_M(t,\check x_n)$, we obtain therefore 
%\begin{align}\label{eq:n_bigger_L}
%\sum_{n\geq L}(p^+_M(t,n)-p^+_M(t,{n+1}))|B_n|
%%	&\leq \sum_{y:\,|y|>L-1}p_M(t,y) +  |B_{L-1}|Q_M(t,L) + \sum_{n\geq L}E_M(t,n)|C_n|\nonumber\\
%	&\leq 1+\sum_{n\geq L}E_M(t,n)|C_n|+ p^+_M(t,L)|B_{L-1}|
%\end{align}
%
%
%
%********************
%On the one hand we easily have $p^+_M(t,L)|B_{L-1}|\leq c\frac {L^{d}}{(\gamma t)^{d/2}}$ for a constant $c>0$ that does not depend on $L$ and $T$. On the other hand, 
%We will use a combination of large deviation estimates and the local central limit theorem for bounding the term $ \sum_{n\geq L}E_M(t,n)|C_n|$.
%
%we need finer tools to bound the term $ \sum_{n\geq L}E_M(t,n)|C_n|$, that is, the Local Central Limit Theorem combined with Large Deviations estimates.
\begin{claim}\label{climatizzo}
For each $\varepsilon>0$ there exist constants $c_1$ and $c_2$ such that
\begin{align}\label{dumbo}
\sum_{n\geq L}E_M(t,n)|C_n|\leq c_1\,\frac{1}{(\gamma t)^{1/2-\varepsilon d}}
\end{align}
and 
\begin{align}\label{dumbo2}
E_M(t,L)|B_{L-1}|\leq c_2\,\frac {L^{d}}{(\gamma t)^{(d+1)/2}}\,.
\end{align}
\end{claim}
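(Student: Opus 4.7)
Both inequalities of the claim reduce to a quantitative control of the spatial variation of the discrete heat kernel $p(t,\cdot)$ on a thin crown $C_n$. The key ingredient I would establish is the following refinement of the Fourier estimate \eqref{eq:fourier}: there exist constants $c,c'>0$ such that, for all $x,y\in C_n$ and all $n\geq 1$,
\begin{equation}\label{plan-key}
|p(t,x)-p(t,y)|\leq \frac{c\,n}{(\gamma t)^{d/2+1}}\,\mathrm{e}^{-c'n^{2}/(\gamma t)},
\end{equation}
and likewise with $p$ replaced by $p_M$. The heuristic is that the discrete heat kernel is approximately radially symmetric while two points of the same crown satisfy $\big|\|x\|^{2}-\|y\|^{2}\big|\leq 2n+1$, so that a radial-derivative estimate on the Gaussian $G_{\gamma t}(x)=(2\pi\gamma t)^{-d/2}\exp(-\|x\|^{2}/(2\gamma t))$, namely $|\partial_r G_{\gamma t}(x)|\leq (\|x\|/\gamma t)G_{\gamma t}(x)$, produces exactly the right-hand side of \eqref{plan-key}. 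To make this rigorous within the Fourier framework of the paper, I would start from $p(t,x)-p(t,y)=(2\pi)^{-d}\int e^{\gamma t\alpha(\xi)}(e^{i\xi\cdot x}-e^{i\xi\cdot y})\,d\xi$ and integrate by parts in $\xi$, trading the factor $\|x-y\|\leq 2n$ against $\|x\|\leq n$ and bringing down the Gaussian tail $\mathrm{e}^{-c'\|x\|^{2}/(\gamma t)}$; the passage from $p$ to $p_M$ is immediate by averaging over $B_M$.

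Once \eqref{plan-key} is in hand, \eqref{dumbo2} follows by specialising to $n=L$. Writing $L=s\sqrt{\gamma t}$, the function $s\mapsto s\,\mathrm{e}^{-c's^{2}}$ is bounded on $[0,\infty)$, so
\[
E_M(t,L)\ \leq\ \frac{c\,L}{(\gamma t)^{d/2+1}}\,\mathrm{e}^{-c'L^{2}/(\gamma t)}\ \leq\ \frac{c''}{(\gamma t)^{(d+1)/2}}
\]
uniformly in $L$, and multiplying by $|B_{L-1}|\leq c_3\, L^d$ yields \eqref{dumbo2}.

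For \eqref{dumbo}, I sum the $p_M$-version of \eqref{plan-key} against $|C_n|\leq c_3\, n^{d-1}$:
\[
\sum_{n\geq L}E_M(t,n)|C_n|\ \leq\ c\sum_{n\geq L}\frac{n^{d}}{(\gamma t)^{d/2+1}}\,\mathrm{e}^{-c'n^{2}/(\gamma t)}.
\]
Bounding this discrete sum by the corresponding integral and substituting $s=n/\sqrt{\gamma t}$ collapses the right-hand side to $(\gamma t)^{-1/2}\int_{L/\sqrt{\gamma t}}^{\infty} s^{d}\,\mathrm{e}^{-c's^{2}}\,ds$, which is at most a constant times $(\gamma t)^{-1/2}$. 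This is in fact sharper than the claimed \eqref{dumbo}; the $\varepsilon d$ slack in the statement is presumably reserved for a version of \eqref{plan-key} proved without appealing to the full machinery of a local central limit theorem.

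The main obstacle I anticipate is deriving \eqref{plan-key} with the correct Gaussian tail: the uniform estimate $|p(t,x+e)-p(t,x)|\leq c(\gamma t)^{-(d+1)/2}$ already proved in \eqref{eq:fourier} is too crude for $\|x\|\gg\sqrt{\gamma t}$ and must be upgraded either by a Fourier integration-by-parts argument, by a local CLT with explicit remainder, or by a probabilistic reflection-type argument exploiting the near rotational invariance of the simple random walk. Everything else is a direct Gaussian computation.
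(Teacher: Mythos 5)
Your plan rests on a single uniform pointwise estimate \eqref{plan-key} on the crown-wise variation of the heat kernel, with a Gaussian tail, and you are right that such an estimate — if established for $p_M$ — would immediately yield both \eqref{dumbo} (in the sharper form $(\gamma t)^{-1/2}$) and \eqref{dumbo2} by the scaling computation you sketch. That is a genuinely different decomposition from the paper's: the paper does not attempt any Gaussian-tail control of $E_M(t,n)$ uniformly in $n$, but instead splits $\sum_{n\geq L}$ into three regions according to the position of $n$ relative to $M\pm(\gamma t)^{1/2+\varepsilon}$, treats the two extremal regions by crude large-deviations bounds on the probability of a walk crossing a distance $\gg\sqrt{\gamma t}$, and treats the middle region by a triangle inequality $p_M\leftrightarrow G_M\leftrightarrow G_M\leftrightarrow p_M$ using only the \emph{uniform} LCLT $|p-G|\leq c(\gamma t)^{-d/2-1}$ (no Gaussian tail) together with a dedicated rotation-and-symmetric-difference argument on the continuous domains $\hat D,\check D'$. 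The $\varepsilon d$ loss in \eqref{dumbo} is precisely the price the paper pays for this split; your approach, if it closed, would buy the clean $(\gamma t)^{-1/2}$.

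However, there are two genuine gaps. First, you state \eqref{plan-key} but do not prove it; you acknowledge it as ``the main obstacle'' and list three possible strategies without carrying out any of them, so the hardest part of the argument is left as a placeholder. The estimate requires a Gaussian-tail version of the local CLT for $|p(t,z)-G(t,z)|$ — the uniform bound $c(\gamma t)^{-d/2-1}$ established in \eqref{eq:fourier} is indeed not enough — and deriving it by integration by parts in the Fourier integral over the torus is delicate (boundary effects, higher derivatives of $\alpha(\xi)$); it is a nontrivial lemma, not a Gaussian computation. Second, and more seriously, your claim that ``the passage from $p$ to $p_M$ is immediate by averaging over $B_M$'' is incorrect. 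Writing
\[
p_M(t,\hat x_n)-p_M(t,\check x_n)=\frac{1}{|B_M|}\sum_{y\in B_M}\big(p(t,\hat x_n+y)-p(t,\check x_n+y)\big),
\]
the pair $(\hat x_n+y,\,\check x_n+y)$ does \emph{not} lie in a common crown — their norms can differ by as much as $2\|y\|\leq 2M$ — so the pointwise bound \eqref{plan-key} cannot simply be applied under the sum. What is actually needed is a comparison of the averages of $p$ over the two translated balls $B_M(\hat x_n)$ and $B_M(\check x_n)$ using near rotational invariance, and since the rotation taking $\check x_n$ to $\hat x_n$ does not preserve the lattice, one is forced into exactly the continuous-domain rotation argument ($\hat D$ vs.\ $\check D'$, symmetric-difference estimates) that occupies the bulk of the paper's proof of the middle part $H_2$. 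In short, the $p_M$ version of \eqref{plan-key} is plausible but its proof is essentially all of the work, and the claim that it follows ``immediately'' conceals the core difficulty.
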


The proof of Claim \ref{climatizzo} is rather technical and we postpone it to  Subsection \ref{appendix3} in the Appendix. We show now how to conclude the proof of Lemma \ref{lemma2}. Plugging \eqref{dumbo} and \eqref{dumbo2} into \eqref{napoleon} we obtain \eqref{colorado} with $\alpha=\varepsilon d$. This implies \eqref{eqn:eta_upper_bound} as mentioned before.

\smallskip

The lower bound \eqref{eq:eta_lower_bound} can be proven in an almost identical way as the upper bound: 
As in \eqref{eq:sonice}, we first note that for any positive $u$ one has $p_M(t,y)u(y)\geq\sum_{n\in\N}(p^-_M(t,n)-p^-_M(t,{n+1}))u(y)\mathds{1}_{n\geq \|y\|}$ and obtain a lower bound with the same flavor of \eqref{eq:boundetabar}. Then one splits once again the sum in $n$: This time the case $n< L$ can be lower-bounded directly with $0$. For the case $n\geq L$ we follow the same calculation as in \eqref{eq:valid_for_Qq} and produce a similar lower bound that this time reads:
$$
\sum_{n\geq L}(p^-_M(t,n)-p^-_M(t,{n+1}))|B_n|
%	&\leq \sum_{y:\,|y|>L-1}p_M(t,y) +  |B_{L-1}|Q_M(t,L) + \sum_{n\geq L}E_M(t,n)|C_n|\nonumber\\
	\geq 1-\sum_{n\geq L}E_M(t,n)|C_n|- E_M(t,L)|B_{L-1}|\,.%\max_{y\in B_{L}}p_M(t,y)\,.
$$
Then we can use again the results of Claim \ref{climatizzo} and proceed exactly as for the upper bound \eqref{eqn:eta_upper_bound}, concluding the proof of the lemma.

\subsection{Proof of Proposition \ref{proposition1}}\label{sec:proofs_of_the_propositions}

%\begin{proof}[Proof of Proposition \ref{proposition1}]
We extend the argument in \cite{HS15} to the case of $N$ different kinds of particles. By union bound
\begin{align}\label{eq:originedelmondo}
\P^\eta\big(0\not\in G(\eta(t),J)\big)
	\leq \sum_{k\in \mathcal T}\sum_{J'\geq J} \P^\eta\big(|\langle\eta(t)\rangle_{J'}^k-p_k|>\epsilon_{J}\big)\,.
\end{align}
Notice that if $p_k=0$ for some $k\in\mathcal T$, the relative probability in the r.h.s.~is just zero. For all the other $k$'s we want to bound
\begin{align}\label{eq:absolute_positive}
&\P^\eta\big(|\langle\eta(t)\rangle_{J'}^k-p_k|>\epsilon_{J}\big)= \P^\eta\big(\langle\eta(t)\rangle_{J'}^k>p_k+\epsilon_{J}\big)
		+ \P^\eta\big(\langle\eta(t)\rangle_{J'}^k<p_k-\epsilon_{J}\big)\,.
%	&\leq P^\eta(\langle\eta(t)\rangle_{0,J'}^k>(1+\varepsilon_{J})p_k)
%		+ P^\eta(\langle\eta(t)\rangle_{0,J'}^k<(1-\varepsilon_{J})p_k).
\end{align}
For the first summand we rewrite
\begin{align}\label{complimentoni}
\P^\eta\big(\langle\eta(t)\rangle_{J'}^k>p_k+\epsilon_{J}\big)
	&= \P^\eta\big(\langle\eta(t)\rangle_{J'}^k-\E^\eta[\langle\eta(t)\rangle^k_{J'}]> p_k+\epsilon_{J}-\E^\eta[\langle\eta(t)\rangle^k_{J'}]\big)\,.
\end{align}
Since by hypothesis $\gamma t>L^{3}$ and $0\in G(\eta,L)$, Lemma \ref{lemma2} with $\alpha$ small enough guarantees that 
%\begin{align*}
$
\E^\eta[\langle\eta(t)\rangle^k_{J'}]
	\leq (1+c/L)\sup\{\langle\eta\rangle_{L'}^k, L'\geq L\}
	\leq(1+c/L)(p_k+\epsilon_L)
%\end{align*}
$
so that 
%\begin{align*}
$
p_k+\epsilon_J-\E^\eta[\langle\eta(t)\rangle^k_{J'}]
%	&\geq p_k+\epsilon_J-(1+c/L)(p_k+\epsilon_L)\\
	\geq \epsilon_J-\epsilon_L-{c(p_k+\epsilon_L)}/{L}
	\geq (\epsilon_J-\epsilon_L)/{2},
%\end{align*}
$
where for the last inequality we have used the 
hypothesis that $L(\epsilon_J-\epsilon_L)$ is large enough. This applied to \eqref{complimentoni} gives
\begin{align}\label{stessacosa1}
\P^\eta\big(\langle\eta(t)\rangle_{J'}^k>p_k+\epsilon_{J}\big)
	&\leq \P^\eta\left(\big|\langle\eta(t)\rangle_{J'}^k-\E^\eta[\langle\eta(t)\rangle^k_{J'}]\big|\geq (\epsilon_J-\epsilon_L)/{2}\right)
	\leq  2\,{\rm e}^{-c (\epsilon_J-\epsilon_L)^2J'^d },
\end{align}
where the last inequality follows from Lemma \ref{lemma1}.

\smallskip

Conversely, we bound the second summand of \eqref{eq:absolute_positive} by noticing that, by Lemma  \ref{lemma2},
$
\E^\eta[\langle\eta(t)\rangle^k_{J'}]
	\geq (1-c/L)\inf\{\langle\eta\rangle_{L'}^k, L'\geq L\}
	\geq(1-c/L)(p_k-\epsilon_L),
$
so that
$
p_k-\epsilon_J-\E^\eta[\langle\eta(t)\rangle^k_{0,J'}]
	%&\leq -\epsilon_J+\epsilon_L+{c(p_k+\epsilon_L)}/{L}
	\leq -(\epsilon_J-\epsilon_L)/2<0
$.
Hence, as before,
\begin{align}\label{eq:stessacosa2}
\P^\eta\big(\langle\eta(t)\rangle_{J'}^k<p_k-\epsilon_{J}\big)
	%&\leq \P^\eta(\langle\eta(t)\rangle_{0,J'}^k-\E^\eta[\langle\eta(t)\rangle^k_{0,J'}]
	%	<(1-\epsilon_{J})p_k-\E^\eta[\langle\eta(t)\rangle^k_{0,J'}])\nonumber\\
	&\leq \P^\eta\big(\big|\langle\eta(t)\rangle_{J'}^k-\E^\eta[\langle\eta(t)\rangle^k_{0,J'}]\big| 	>  (\epsilon_J-\epsilon_L)/{2}\big)
		\leq  2\,{\rm e}^{-c  (\epsilon_J-\epsilon_L)^2J'^d}.
\end{align}
Plugging \eqref{stessacosa1} and \eqref{eq:stessacosa2} into \eqref{eq:absolute_positive} and this back into \eqref{eq:originedelmondo} we finally obtain
\begin{align*}
\P^\eta\big(0\not\in G(\eta(t),J)\big)
	&\leq \sum_{J'\geq J} \sum_{k\in \mathcal T:\,p_k>0}
		4\,{\rm e}^{-c J'^d (\epsilon_J-\epsilon_L)^2}\nonumber\\
	&\leq c_1\,\sum_{k\in \mathcal T:\,p_k>0}\frac{{\rm e}^{-c J^d (\epsilon_J-\epsilon_L)^2}}{(\epsilon_J-\epsilon_L)^2}
	\leq c_1\,(2^{N})^{2d} \frac{{\rm e}^{-c J^d(\epsilon_J-\epsilon_L)^2}}{(\epsilon_J-\epsilon_L)^2}.\qedhere
\end{align*}
\bigskip

\section{Proof of Theorem \ref{thm:ballistic}}\label{sec:proof_of_theorem}

%Now that our machinery is set, we are ready to prove Theorem \ref{thm:ballistic}. 
Given a vector $\vec v\in\R^d$, we denote, for $t\geq 0$,
\begin{align*}
X_t^{\vec v}:=\langle X_t,\vec v \rangle\,,
\end{align*}
where $\langle\cdot,\cdot\rangle$ denotes the usual inner product.
%We also define 
%$$
%v:=\big\langle \E_\mu[D],\vec v\big\rangle
%$$
%the annealed drift in direction $\vec v$.
In order to prove Theorem \ref{thm:ballistic}, we will actually prove the following proposition, which is clearly equivalent to Theorem \ref{thm:ballistic}: 
\begin{proposition}[Equivalent of Theorem \ref{thm:ballistic}]
\label{prop:projection}
Assume \eqref{eq:main assumption} and consider $\vec v \in \R^d$ with norm $1$ such that $v:=\langle\E_{\mu}[D],\vec v\rangle>0$. For all $\varepsilon>0$, there exists $\gamma(\varepsilon)$ such that, for all $\gamma\geq \gamma(\varepsilon)$, $P$-a.s.~it holds that, for $t$ large enough,
\begin{align*}
\frac{X_t^{\vec v}}{t}\in (v-\varepsilon,v+\varepsilon)\,.
\end{align*}

%$$
%%P-a.s.\ eventually \quad 
%\left|\left\langle\frac{X_n}{n} -\E_\mu[D]  , \vec v \right\rangle\right|<\varepsilon.
%$$
%%$$
%%P-a.s.\ eventually \quad \frac{X_n^{\vec v}}{n}   \in B_\varepsilon(\E_\mu[s]).
%%$$
%\rosso{Notazione diversa dal main Theorem? Inoltre, forse meglio gia' introdurre $X^{\vec v}$ e v???}
\end{proposition}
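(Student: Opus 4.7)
The plan is to prove Proposition \ref{prop:projection} by a multi-scale renormalization argument, as foreshadowed in the outline. Fix the target precision $\varepsilon>0$ and the unit vector $\vec v$ with $v=\langle\E_\mu[D],\vec v\rangle>0$. I would introduce a geometric sequence of time scales $T_n=T_0\cdot 2^n$ (with $T_0=T_0(\varepsilon)$ to be chosen) and associated spatial scales $L_n$ satisfying $\gamma T_n\gg L_n^3$, so that the hypothesis of Proposition \ref{proposition1} holds at every scale. The induction hypothesis at step $n$ is: with overwhelming $P$-probability, (a) $|X^{\vec v}_{T_n}-vT_n|\leq\varepsilon T_n$, and (b) $X_{T_n}$ is an $L_n$-good site for $\eta(T_n)$ in the sense of Definition \ref{def:good}.

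The base case, corresponding to Proposition \ref{lemma4}, exploits that $\gamma$ may be taken as large as needed. On the time horizon $[0,T_0]$ the environment undergoes an enormous number of exchanges between any two consecutive walker steps, so the particle sitting under the walker at each integer time is essentially a fresh sample from a nearly-equilibrium local environment. A direct comparison with an i.i.d.\ random walk whose step distribution has mean $\E_\mu[D]$, combined with Hoeffding concentration, yields (a) for a suitable $T_0$. Property (b) is established by a direct concentration argument using the fact that the product measure $\pi$ is stationary under the interchange dynamics, combined with the conservative choice of $\epsilon_L$ in \eqref{epsilon} which dominates typical fluctuations of order $L^{-d/2}$.

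The inductive step, corresponding to Proposition \ref{prop:renorm}, goes from scale $n$ to scale $n+1$. Conditioning on the good event at step $n$ and on the value of $X_{T_n}$, the Markov property and translation invariance reduce the problem to a shifted system in which $0$ is $L_n$-good. I would then Doob-decompose the walker's projected displacement,
\[
X^{\vec v}_{T_{n+1}}-X^{\vec v}_{T_n}=M_n+A_n,\qquad A_n=\sum_{s=T_n}^{T_{n+1}-1}\langle D(\omega_{X_s}(s)),\vec v\rangle,
\]
and control the two terms separately. Since the increments of $M_n$ are bounded by $1$, Azuma--Hoeffding gives $|M_n|\leq \varepsilon(T_{n+1}-T_n)/3$ outside an event of exponentially small probability in $T_n$. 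The drift term $A_n$ requires that along the walker's path the empirical density of particles of each type $k$ is within $O(\varepsilon)$ of $p_k$: this is where the finite type discretization (choose $N$ so that $2^{-N}$ is small compared with $\varepsilon$) and Proposition \ref{proposition1} enter, the latter applied at many intermediate times ensuring that $X_{T_n}$ is $L'$-good (for $L'$ chosen slightly smaller than $L_n$) throughout $[T_n,T_{n+1}]$, which in turn forces the ball enclosing the walker's trajectory during that interval to display typical type densities.

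The main obstacle is precisely the control of $A_n$: the walker's trajectory is correlated with the environment it reads off, so a naive law of large numbers does not apply. The resolution uses the rapid mixing produced by a large $\gamma$: in the unit time between walker steps the environment around $X_s$ reshuffles enough particles that the type read at step $s+1$ can be compared, up to a quantitative error, with an independent draw from the good ball around $X_s$, whose type composition is close to $\mu\circ T^{-1}$. Summing bounds uniformly in $s\in[T_n,T_{n+1}]$ and $n$ yields summable tails for the exceptional events; Borel--Cantelli concludes the almost-sure statement at the dyadic times $T_n$, and the deterministic bound $|X_t-X_{T_n}|\leq t-T_n$ interpolates to all large $t$, which is carried out in Subsection \ref{proiezione}.
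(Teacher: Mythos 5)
Your overall architecture — a multi-scale renormalization driven by Proposition \ref{proposition1}, with a homogenization base case using the freedom to take $\gamma$ large, followed by iteration and Borel--Cantelli — matches the paper. But two parts of your plan do not go through as written.

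First, the geometric scales $T_n = T_0\cdot 2^n$ together with the crude interpolation $\|X_t - X_{T_n}\|\le t - T_n$ cannot give the statement for all large $t$. For $T_n\le t<T_{n+1}=2T_n$ you only get $|X_t^{\vec v}/t - v|\le (1+v)(t-T_n)/t + \varepsilon T_n/t$, and since $t-T_n$ can be as large as $T_n$, the first term is of order one, not $O(\varepsilon)$. Using the bound at both endpoints $T_n$ and $T_{n+1}$ does not help either, since the ratio $T_{n+1}/T_n=2$ is not close to $1$. The paper avoids interpolation entirely: Proposition \ref{prop:renorm} is stated and proved for \emph{every} $t\ge T$, by constructing, for each given $t$, a backward sequence $t_{\tilde n}=t$, $t_{k-1}=\lfloor\sqrt{t_k}\rfloor$ down into $[T^{1/3},T]$; the renormalization then squares the time scale, and the accumulated loss in the drift is controlled by the converging sequence $(c_n)$. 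If you insist on a fixed sequence, you would need $T_{n+1}/T_n\to 1$ (e.g.\ $T_n=T_0(1+\delta)^n$ with $\delta\lesssim\varepsilon$), which then forces you to re-examine summability of the error probabilities and the accumulation of drift errors over the much larger number of scales.

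Second, the Doob decomposition $X^{\vec v}_{T_{n+1}}-X^{\vec v}_{T_n}=M_n+A_n$ does not by itself isolate a controllable object: the compensator $A_n=\sum_s\langle D(\omega_{X_s}(s)),\vec v\rangle$ depends on the \emph{specific} particle the walker sits on, not on local empirical densities, so goodness of the surrounding balls does not give a pointwise bound on $A_n$. You implicitly acknowledge this by invoking fast re-mixing to replace each read type by a fresh sample, but that replacement is an equality in distribution, not a pointwise estimate; it would require a second martingale decomposition of $A_n$ and a second concentration argument. The paper sidesteps this by proving a one-step stochastic domination directly: on the good event $\mathcal E_{m-1}$, the conditional law of $X^{\vec v}_{m+1}-X^{\vec v}_m$ given $(\eta(m-1),X_1,X_m)$ is dominated by the law of an auxiliary i.i.d.\ variable $Y_m$ with mean close to $v$ (Lemma \ref{lemma2} plus the type discretization with $2^{-N}\ll\delta$). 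Iterating the inequality $P^\eta(X^{\vec v}_{m+1}-X^{\vec v}_1\le a)\le P^\eta(X^{\vec v}_{m}-X^{\vec v}_1+Y_m\le a)+\P^\eta(\mathcal E^c_{m-1})$ packages both the step fluctuation and the type fluctuation into a single i.i.d.\ comparison, after which classical large deviations apply. That single domination replaces the martingale-plus-compensator split and is the part of the proof your sketch is missing.
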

\noindent The proof of Proposition \ref{prop:projection} is postponed to Subsection \ref{proiezione}.
%Let us fix some parameters. We will take the sequence $(\epsilon_L)_{L\in\N}$ introduced in Definition \ref{def:good} to control the excess of density in boxes of size $L$ as
%\begin{equation}\label{epsilon}
%\epsilon_L=\frac{1}{1+\log L}
%\end{equation}
%EXPLAIN WHY.\

\smallskip 

Without being too rigorous, we call \textit{trap} a region where an anomalous density of some type of particles occurs.  If all sites of a given region are $L$-good for some $L>0$, we say that there are no traps larger than $L$ in that region. 
We define now a sequence of positive numbers $(\phi_L)_{L\in\N}$. One can think of $\phi_L$ as the size of ``acceptable'' traps in a ball of radius $L$.  
%If all sites of a given region are $\phi_L$ -Good for some $L>0$, it thus means that there are no traps larger than $L$ in this region. 
We are not interested in a sharp estimate of the typical size of the maximal trap (that should be of order $\log L$). Instead, in order to simplify our computation, we choose rather to overestimate  this typical size and define, for $L \in\N$, 
\begin{align}\label{cento}
\phi_L:= L^{1/100}.
\end{align}

\subsection{Outline of the proof}
The proof relies on a renormalisation procedure adapted from \cite{HS15}.
We fix throughout the rest of the section a vector $\vec v$ satisfying the assumption of Proposition \ref{prop:projection}. 
%Recall that 
%$
%v:=\big\langle \E_\mu[D],\vec v\big\rangle
%$
%is the annealed drift in direction $\vec v$.
 Heuristically, the hypothesis we want to iterate is the following:
  ``If the box of size $t$ at time $0$ around the walker has no trap larger than $\phi_t$, then, at time $t$, the walker is, with large probability, at a distance $(v+o(1))\,t$ from the origin   in the direction $\vec v$".

The first step of the procedure (see Proposition \ref{lemma4}) relies on a homogenization obtained by choosing a large enough finite time $T$ and  some  $\gamma$  large enough chosen accordingly. To prove the iteration itself (which consists in going from scale $t$ to scale $t^2$) we divide the space-time box of size $t^2$ into boxes of size $t$ and use Proposition \ref{proposition1}  to control the size of the traps in each of these sub-boxes. This allows us to apply the iteration hypothesis at scale $t$. 

\subsection{Initialization of the renormalization}
\begin{proposition}%[%Equivalent of Lemma 4, 
%Initialization of the renormalization]
\label{lemma4} 
For all $\varepsilon>0$, there exists $N$ large enough,  $ T\in\N$ large enough and $\gamma>0$ large enough (e.g., $\gamma \geq \phi_T^{3}$) such that the following holds: Given $\eta\in\mathcal S^{\Z^d}$ satisfying
$$
 y\in G(\eta,\phi_T)\qquad \forall y\in B_T,
$$
it holds that 
\begin{align*}
P^\eta\big(|X_T^{\vec v}-vT|\geq\varepsilon T\big)
	\leq {\rm e}^{-\phi_T^{1/4}}.
\end{align*}
\end{proposition}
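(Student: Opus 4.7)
The argument follows a martingale decomposition combined with the homogenization results of Section~\ref{sec:auxiliary}. Enrich the probability space by the walker's internal uniform randomness $(U_k)_{k\ge 0}$ and set $\mathcal F_k:=\sigma(\eta(s):\,0\le s\le k;\,U_0,\ldots,U_{k-1})$. Write $\Delta_k:=\langle X_{k+1}-X_k,\vec v\rangle$ and $d_k:=\langle D(\eta_{X_k}(k)),\vec v\rangle$, so $\E^\eta[\Delta_k\mid\mathcal F_k]=d_k$, and decompose
\begin{equation*}
X_T^{\vec v}-vT \;=\; \underbrace{\sum_{k=0}^{T-1}(\Delta_k-d_k)}_{=:\,M_T} \;+\; \underbrace{\sum_{k=0}^{T-1}\bigl(d_k-\E^\eta[d_k\mid\mathcal F_{k-1}]\bigr)}_{=:\,M_T'} \;+\; \sum_{k=0}^{T-1}\bigl(\E^\eta[d_k\mid\mathcal F_{k-1}]-v\bigr).
\end{equation*}
Both $M_T$ and $M_T'$ are martingales with increments bounded by $2$. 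Azuma's inequality therefore bounds each of $|M_T|$ and $|M_T'|$ by $\varepsilon T/3$ with failure probability at most $2e^{-c\varepsilon^2 T}$, much smaller than $e^{-\phi_T^{1/4}}$ for large $T$ since $\phi_T=T^{1/100}$.

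\textbf{Key estimate for the residual term.} It remains to prove $|\E^\eta[d_k\mid\mathcal F_{k-1}]-v|\le \varepsilon/3$ uniformly in $1\le k\le T-1$ on an event of probability $\ge 1-e^{-\phi_T^{1/4}}/2$. Given $\mathcal F_{k-1}$, $X_k$ is a function of $\mathcal F_{k-1}$ and $U_{k-1}$, while $\eta(k)$ is obtained from $\eta(k-1)$ by fresh interchange events over $[k-1,k]$, independent of $U_{k-1}$; hence $X_k$ and $\eta(k)$ are conditionally independent given $\mathcal F_{k-1}$, and by the Markov property of the interchange process
\begin{equation*}
\E^\eta[d_k\mid\mathcal F_{k-1}] \;=\; \sum_{y\in\Z^d}P^\eta(X_k=y\mid\mathcal F_{k-1})\,\E^{\eta(k-1)}\bigl[\langle D(\eta_y(1)),\vec v\rangle\bigr].
\end{equation*}
Choose $N$ so large that $|\langle D(s),\vec v\rangle-\bar d_j|\le\varepsilon/10$ whenever $T(s)=j$, for fixed representatives $\bar d_j$. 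Since $v=\sum_j p_j\bar d_j+O(\varepsilon/10)$, it suffices to show that $|\P^{\eta(k-1)}(T(\eta_y(1))=j)-p_j|\le \varepsilon/(10\cdot 2^{2dN})$ for every $j\in\mathcal T$ and every $y\in B_T$.

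\textbf{Applying the tools from Section~\ref{sec:auxiliary}.} The pointwise form of Lemma~\ref{lemma2} (centred at $y$, with $M=0$, $t=1$) yields precisely this bound, provided $\eta(k-1)$ is $J$-good at $y$ for a suitable scale $J$: the multiplicative mixing error $1+c\bigl(J^{d+1}/\gamma^{(d+1)/2}+1/\gamma^{1/2-\alpha}\bigr)$ tends to $0$ for $\gamma\ge\phi_T^3$ as $T$ grows, and the additive error is $O(\epsilon_J)\to 0$. Take $J=\phi_T/2$. Proposition~\ref{proposition1}, applied to the evolution of $\eta$ with initial scale $L=\phi_T$ and time $k-1$, gives
\begin{equation*}
\P^\eta\bigl(y\notin G(\eta(k-1),\phi_T/2)\bigr)\le c_1\exp\!\bigl(-c_2\phi_T^d/(\ln\phi_T)^4\bigr),
\end{equation*}
using $\gamma(k-1)\ge\gamma\ge\phi_T^3=L^3$ for $k\ge 2$ together with $L(\epsilon_J-\epsilon_L)\asymp \phi_T/(\ln\phi_T)^2\to\infty$. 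A union bound over $y\in B_T$ and $k\in[2,T-1]$ (of cardinality $\le (2T+1)^d T$) still leaves a total failure probability far smaller than $e^{-\phi_T^{1/4}}$. The boundary case $k=1$ is treated by applying Lemma~\ref{lemma2} directly to $\eta$ at scale $L=\phi_T$, exploiting the standing hypothesis that every $y\in B_T$ is $\phi_T$-good in $\eta$.

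\textbf{Main obstacle.} The technical crux is twofold: first, the conditional-independence decoupling of $X_k$ and $\eta(k)$ given $\mathcal F_{k-1}$, which converts the walker-driven average $\E^\eta[d_k\mid\mathcal F_{k-1}]$ into a deterministic combination of pointwise environment probabilities for which Lemma~\ref{lemma2} is designed; and second, the scale coordination---choosing $N$ (to make the type discretization of $D$ finer than $\varepsilon$), then $T$ large (so that $\phi_T$ is large and $\epsilon_{\phi_T}$ small), and finally $\gamma\ge\phi_T^3$ (so that both the mixing error in Lemma~\ref{lemma2} and the hypothesis $\gamma t>L^3$ of Proposition~\ref{proposition1} with $t=1$ are met), while keeping both Azuma exponents and the union-bound failure probabilities comfortably below $e^{-\phi_T^{1/4}}$.
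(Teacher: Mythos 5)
Your martingale decomposition gives a correct proof, but it follows a genuinely different route from the paper's. The paper splits into two one-sided tails and, for each, constructs an auxiliary i.i.d.\ sequence $(Y_k)$ (resp.\ $(\tilde Y_k)$) whose law is a $\delta$-tilt of the type-discretized marginal, then establishes a \emph{stochastic domination} of the walk increments by $Y_k$ on the good events $\mathcal E_{m-1}$, and finally invokes a classical i.i.d.\ large deviation bound; the tilting by $\delta$ is designed precisely to absorb the density fluctuation $\epsilon_{\phi_T^{1/2}}$ coming from Lemma~\ref{lemma2}. You instead center $X_T^{\vec v}-vT$ into two bounded-increment martingales $M_T$, $M_T'$ (handled by Azuma) plus a drift term $\sum_k(\E^\eta[d_k\mid\mathcal F_{k-1}]-v)$, and control the drift pointwise by the decoupling identity $\E^\eta[d_k\mid\mathcal F_{k-1}]=\sum_y P^\eta(X_k=y\mid\mathcal F_{k-1})\,\E^{\eta(k-1)}[\langle D(\eta_y(1)),\vec v\rangle]$. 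Your approach buys a cleaner, two-sided argument (no case split into $j_{\min}$/$j_{\max}$, no ad hoc $\delta$-tilting) and requires only a conditional-mean estimate rather than the full CDF comparison of the paper. Both routes use the same homogenization engine: Lemma~\ref{lemma2} for the one-step density estimate and Proposition~\ref{proposition1} to propagate goodness in time, with a union bound over $B_T\times\{1,\dots,T\}$.

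Two small points need tidying. First, your three-way split into $\varepsilon T/3$ budgets does not quite accommodate the $k=0$ term $d_0-v$, which can be as large as $2$ in absolute value and is not covered by the $k\ge 1$ estimate $|\E^\eta[d_k\mid\mathcal F_{k-1}]-v|\le\varepsilon/3$; this is an $O(1)$ discrepancy and is fixed by simply allotting, say, $\varepsilon T/4$ to each of the two martingales and $\varepsilon T/4$ to the drift sum, plus a separate $\varepsilon T/4\ge 2$ slot for the $k=0$ term once $T$ is large — the paper sidesteps this by explicitly subtracting $X_1^{\vec v}$ and paying a $+1$. Second, the bound you quote from Proposition~\ref{proposition1} drops the polynomial prefactor $1/(\epsilon_J-\epsilon_L)^2\asymp(\ln\phi_T)^4$; this is harmless because the exponential dominates, but should be noted. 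Neither issue affects the validity of the argument.
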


\begin{proof}
We will first prove that 
\begin{align}\label{eq:half_lemma}
P^\eta(X_T^{\vec v}\leq (v-\varepsilon)T)
	\leq \tfrac 12\,{\rm e}^{-\phi_T^{1/4}}.
\end{align}
Since we have no information on the particle at the origin at time $0$, we just bound the first step with the worst case:
$$
P^\eta\big(X_T^{\vec v}\leq (v-\varepsilon)T\big)\leq P^\eta\big(X_T^{\vec v}-X_1^{\vec v}\leq (v-\varepsilon)T+1\big).
$$

For $j\in\mathcal I$ we let %$v_j\in[-1,1]$ be the projections of the vector $\vec v$ along the vectors of the canonical basis and their opposites: 
$v_j:=\langle \vec v, e_j\rangle\in[-1,1]$. For simplicity we suppose that $v_j\not=v_i$ for all $i\not =j$ (the proof can be easily adapted to the other cases).
We also define $v_{\max}:=\max_{j\in\mathcal I}\{v_j\}$, $v_{\min}:=-v_{\max}$ and their relative indices $j_{\max}:=\argmax v_{\max}$, $j_{\min}:=-j_{\max}$. 
% {\color{red}With an abuse of notation we also relabel the coordinates of our transition-probability vectors $s$ such that $s_j$ is now the probability associated to the vector $v_j$ in the obvious way, for $j=1,...,2d$. ????? FRAN\c COIIIIIIS!}
%For $j=1,...,2d$, let $v_j:=\langle e_j,\vec v\rangle$, the projection along $\vec v$ of the unitary coordinate vectors (in both directions). Without loss of generality we suppose that $x_1\leq x_j$ for all $j=2,...,2d$.
By eventually enlarging the original space, we consider a sequence $(Y_k)_{k\in\N}$ of i.i.d.~random variables under $P^\eta$ with values in $\{v_{-d} ,...,v_{d}\}$. The $Y_k$'s are independent of $\eta$, of the interchange process and of the random walker, and they satisfy
\begin{align*}
P^\eta(Y_1=v_j)=
\begin{cases}
\E_\mu [s_j]-\delta \qquad & \mbox{for }j\not= j_{\min} \\
\E_\mu [s_{j_{\min}}]+(2d-1)\delta\qquad & \mbox{for }j=j_{\min},
\end{cases}
\end{align*}
where $\delta>0$ is chosen small enough and $N\in\N$ is chosen large enough such that $2^{-(N-1)}d<\delta<\min\{\E_\mu[s_i]>0\}$ and $2dv_{\max}\delta<\varepsilon/2$. 
%(if these two conditions are not compatible, it will be enough to choose an $N$ big enough from the beginning).
Notice that:

\smallskip 

(i) $Y_1$ is well defined, or can be made well defined with little effort. In fact, if $\E_\mu[s_j]>0$ for all $j\in\mathcal I$, $Y_1$ is well defined because of the choice of $\delta$ that we made. If instead there is some $j\not=j_{\min}$ such that $\E_\mu[s_j]=0$, then we can easily modify the law of  $Y_1$ so that we define $P^\eta(Y_1=v_j)=0$ and adapt the definition of $P^\eta(Y_1=v_{\min})$ consequently;

\smallskip

(ii) $E^\eta[Y_1]>v-\varepsilon/2$. In fact, it is straightforward to check that
\begin{align*}
E^\eta[Y_1]
	&=v+\delta\Big((2d-1)v_{\min}-\sum_{j\not = j_{\min}}v_j\Big)=v-2d v_{\max}\delta>v-\varepsilon/2,
\end{align*}
where we have used the symmetry of the $v_\cdot$'s for the second equality, which implies $\sum_{j\not = j_{\min}}v_j=v_{\rm max}$ and $v_{\rm min}=-v_{\rm max}$. 

\smallskip

In analogy with \cite{HS15}, for $m\geq 1$ we define the events 
\begin{align*}
\mathcal E_{m} :=\big\{y\in G(\eta(m),J),\,\forall y\in B_{m+1}\big\}\qquad \mbox{with } J:=\lfloor\phi_T^{1/2}\rfloor.
\end{align*}
and $\mathcal E_{0} :=\big\{y\in G(\eta(0),\phi_T),\,\forall y\in B_1\big\}$.
%and $\mathcal E_{0} =\{y\in G(\eta(0),\phi_T),\,\forall y\in B_1(0)\}$. 
Notice that, by hypothesis, $P^\eta(\mathcal E_0)=1$. Loosely speaking, the event $\mathcal E^c_m$ indicates that at step $m$ the random walk sees around itself an unfavorable environment. Our aim is to show that 
\begin{align}\label{eq:bye_badevents}
P^\eta(X_T^{\vec v}-X_1^{\vec v}\leq (v-\varepsilon)T+1) 
	&\leq P^\eta\Big(\sum_{m=1}^{T-1}Y_m\leq (v-\varepsilon)T+1\Big) +\sum_{m=1}^{T-1}\P^\eta(\mathcal E^c_{m-1}).
\end{align}
Before showing why this is true, we explain how this implies the first half of the lemma, i.e.~equation \eqref{eq:half_lemma}. The first term of the right-hand-side of \eqref{eq:bye_badevents} can be bounded with classical large deviations for sequences of i.i.d.~random variables: Keeping in mind that $E^\eta[Y_1]\in(v-\varepsilon/2,v)$, we have
\begin{align}\label{eq:ld_iid}
P^\eta\Big(\sum_{m=1}^{T-1}Y_m\leq (v-\varepsilon)T+1\Big) \leq {\rm e}^{-cT}
	\leq 	{\rm e}^{-\phi_T^{1/3}}\,.
\end{align}
%for some constant $c>0$ (we will use the symbol $c$ for constants the value of which might change from line to line). 
The second term in the right hand side of \eqref{eq:bye_badevents} can be bounded thanks to Proposition \ref{proposition1} and by taking $T$ and, consequently, $\gamma$ large enough as
\begin{align}\label{eq:unfavorable}
\sum_{m=1}^{T-1}\P^\eta(\mathcal E^c_{m-1})
	&\leq \sum_{m=1}^{T-1}\sum_{y\in B_m} \P^\eta\big(y\not\in G(\eta(m-1),J)\big)
	\leq  c\,\sum_{m=1}^{T-1}\sum_{y\in B_m}
	\frac{{\rm e}^{-cJ^d(\varepsilon_J-\varepsilon_{\phi_T})^2}}{(\varepsilon_J-\varepsilon_{\phi_T})^2}\nonumber\\
	&\leq c \,T^{d+1} \frac{{\rm e}^{-cJ^d(\varepsilon_J-\varepsilon_{\phi_T})^2}}{(\varepsilon_J-\varepsilon_{\phi_T})^2}
	\leq {\rm e}^{-\phi_T^{{d}/3}}.
\end{align}
Putting back \eqref{eq:ld_iid} and \eqref{eq:unfavorable} into \eqref{eq:bye_badevents} we obtain \eqref{eq:half_lemma}.

\smallskip

Let us go back to the proof of \eqref{eq:bye_badevents}. We show that, for any $m\geq 1$ and any $a\in\R$,
\begin{align}\label{eq:m_goodorbad}
P^\eta(X_{m+1}^{\vec v}-X_1^{\vec v}\leq a)\leq P^\eta(X_{(m-1)+1}^{\vec v}+Y_m-X_1^{\vec v}\leq a)+P^\eta(\mathcal E^c_{m-1})\,.
\end{align} 
Iterating this formula and using Fubini gives \eqref{eq:bye_badevents}.
We bound
\begin{align}
\label{eq:but}
P^\eta&(X_{m+1}^{\vec v}-X_1^{\vec v}\leq a)\nonumber\\
	&\leq E^\eta\big[P^\eta(X_{m+1}^{\vec v}-X_m^{\vec v}+X_m^{\vec v}-X_1^{\vec v}\leq a\,|\,\eta(m-1),X_1,X_m)\cdot\1_{\mathcal E_{m-1}}\Big]+P^\eta(\mathcal E_{m-1}^c)\,.
\end{align} 
For any   $b\in \R$ we call $\mathcal J(b):=\{j\in\mathcal I:\,v_j\leq b\}$ and compute
\begin{align}\label{eq:new}
P^\eta(&X_{m+1}^{\vec v}-X_m^{\vec v}\leq b\,|\,\eta(m-1),X_1,X_m) \nonumber\\
	&= \sum_{j\in \mathcal \mathcal J(b)}\sum_{k\in\mathcal{T}}
		P^\eta\big(X_{m+1}^{\vec v}-X_m^{\vec v}=v_j\,|\,\eta(m-1),X_1,X_m,T(\eta_{X_m}(m))=k\big)\nonumber\\
	&\qquad \qquad\qquad \qquad\qquad\qquad\qquad P^\eta(T(\eta_{X_m}(m))=k\,|\,\eta(m-1),X_1,X_m)\nonumber\\
	&\leq\sum_{j\in \mathcal J(b)}\sum_{k\in\mathcal T} \frac{k_j+1}{2^N}P^\eta(T(\eta_{X_m}(m))=k\,|\,\eta(m-1),X_1,X_m)\,.
\end{align}
%\begin{align}\label{eq:new}
%P^\eta(&X_{m+1}^{\vec v}-X_m^{\vec v}\leq b\,|\,\eta(m-1),X_1,X_m) \nonumber\\
%&= \sum_{j\in J(b)}
%		\sum_{\substack{s\in\mathcal S: \\ \exists x,\,\eta_x=s}}
%		P^\eta(X_{m+1}^{\vec v}-X_m^{\vec v}=v_j|\eta(m-1),X_1,X_m,\eta_{X_m}(m)=s)P^\eta(\eta_{X_m}(m)=s|\eta(m-1),X_1,X_m)\nonumber\\
%		&= \sum_{j\in J(b)}
%		\sum_{\substack{s\in\mathcal S: \\ \exists x,\,\eta_x=s}}
%		s_jP^\eta(\eta_{X_m}(m)=s|\eta(m-1),X_1,X_m)\nonumber\\
%		&\leq \sum_{j\in J(b)}
%		\sum_{\substack{s\in\mathcal S: \\ \exists x,\,\eta_x=s}}
%		\frac{T_j(s)+1}{2^N}P^\eta(\eta_{X_m}(m)=s|\eta(m-1),X_1,X_m)\nonumber\\
%		&=\sum_{j\in J(b)}\sum_{k\in\mathcal T} \frac{k_j+1}{2^N}P^\eta(T(\eta_{X_m}(m))=k|\eta(m-1),X_1,X_m).
%\end{align}
On $\mathcal E_{m-1}$, using Markov property, we can apply Lemma \ref{lemma2} with $M=0$, $t=1$, $L=\lfloor \phi_T \rfloor$ if $m=1$ and $L=\lfloor \phi_T^{1/2}\rfloor$ if $m\geq 2$ to control the probability in the last display, so that if $T$ is large enough we have, since $\gamma \geq \phi_T^{3}$,
%, since $z\in G(\eta,\phi_T)$ by hypothesis, and  (???)
\begin{align*}
P^\eta(T(\eta_{X_m}(m))=&k\,|\,\eta(m-1),X_1,X_m)\\
	&\leq \Big(1+c\Big(\frac{L^{d+1}}{\gamma^{\frac{d+1}{2}}}+\frac{1}{\gamma^{\frac12-\alpha}}\Big)\Big)
		\sup\Big\{   \langle \eta(m-1) \rangle_{z,L'}^k:\,{L'\geq L},z\sim X_m \Big\}\\
	&\leq  \big(1+c\frac{1}{\phi_T}\big) (p_k+\epsilon_{\phi_T^{1/2}})
	= p_k+\delta_T,
\end{align*}
where the last inequality is valid for any $m\geq 1$ and where $\delta_T$ is some function that goes to $0$ as $T$ grows.
Putting this back into \eqref{eq:new} we obtain that on $\mathcal E_{m-1}$
\begin{align}
P^\eta(X_{m+1}^{\vec v}-X_m^{\vec v}\leq b\,|\,\eta(m-1),X_1,X_m)
	&\leq \sum_{j\in J(b)}
		\sum_{k\in\mathcal T} \frac{k_j+1}{2^N}p_k+c\,\delta_T   \nonumber \\
	&\leq \sum_{j\in J(b)}
		\sum_{k\in\mathcal T} \frac{k_j}{2^N}p_k+\frac{| \mathcal J(b)|}{2^N}+c\,\delta_T, \label{eq:reprise}
\end{align}		
where $| \mathcal J(b)|$ is the cardinality of $ \mathcal J(b)$.
We notice that $\forall j\not=j_{\min}$ we have $ \sum_{k\in\mathcal T}\frac{k_j}{2^N}p_k\leq P^\eta(Y_m=v_j)+\delta$. On the other hand, $ \sum_{k\in\mathcal T}\frac{k_{j_{\min}}}{2^N}p_k\leq P(Y_m=v_{j_{\min}})-(2d-1)\delta$. Hence on $\mathcal E_{m-1}$,
\begin{align*}
P^\eta(X_{m+1}^{\vec v}-X_m^{\vec v}\leq b\,|\,\eta(m-1),X_1,X_m)
	&\leq P(Y_m\leq b)-(2d-|\mathcal J(b)|)\delta+\frac{|\mathcal J(b)|}{2^{N}}+c\,\delta_T\\
	&\leq P(Y_m\leq b)
\end{align*}
if $T$ is big enough and $|\mathcal J(b)|\leq 2d-1$, since we chose $\delta>2d/2^N$. This last inequality is trivial in the case $|J(b)|=2d$, that is $b\geq v_{\max}$, as both terms are equal to $1$. Plugging the last display back into \eqref{eq:but}, we easily obtain \eqref{eq:m_goodorbad}.

%
%For the conditional probability of each summand we can exactly repeat what we did for the case $m=1$ to obtain
%\begin{align*}
%P^\eta(X_{m+1}^{\vec v}-X_m^{\vec v}\leq a- z_m^{\vec v}-z_1^{\vec v} |\,E_m)\leq P^\eta(Y_{m}\leq a- z_m^{\vec v}-z_1^{\vec v} )
%\end{align*} 
%Putting this back into \eqref{eq:gigio} we obtain
%\begin{align*}
%P^\eta(X_{m+1}^{\vec v}-X_1^{\vec v}\leq a)
%	&\leq \sum_{z_1,z_m\in\Z}P^\eta(Y_{m}\leq a- z_m^{\vec v}-z_1^{\vec v})P^\eta(X_1=z_1,\,X_m=z_m)+\P^\eta(\mathcal E^c_{m-1})\\
%	&= P^\eta(X_{(m-1)+1}^{\vec v}+Y_m-X_1^{\vec v}\leq a)+\P^\eta(\mathcal E^c_{m-1})
%\end{align*}
%as desired.
\medskip
We prove the ``other half'' of the lemma in a completely specular way. We need 
\begin{align}\label{eq:other_half_lemma}
P^\eta(X_T^{\vec v}\geq (v+\varepsilon)T)
	\leq \tfrac 12\,{\rm e}^{-\phi_T^{1/4}},
\end{align}
since this combined with \eqref{eq:half_lemma} gives the final result. As before, we bound the first step with the worst case 
$$
P^\eta(X_T^{\vec v}\geq (v+\varepsilon)T)
	\leq P^\eta(X_T^{\vec v}-X_1^{\vec v}\geq (v+\varepsilon)T-1)
$$
and define a new sequence of i.i.d.~random variables $(\tilde Y_k)_{k\in\N}$ with values in $\{v_{-d},...,v_{d}\}$, independent of the interchange process and of the walker and such that
\begin{align*}
P^\eta(\tilde Y_1=v_j)=
\begin{cases}
\E_\mu [s_j]-\delta \qquad & \mbox{for }j\not=j_{\max} \\
\E_\mu [s_{j_{\max}}]+(2d-1)\delta\qquad & \mbox{for }j=j_{\max}.
\end{cases}
\end{align*}
Similar comments (and eventually corrections) made for the $Y_j$'s hold for the $\tilde Y_j$'s, that is, the $\tilde Y_j$'s are (or can be easily made) well defined and $E^\eta[\tilde Y_1]\in(v,v+\varepsilon/2)$. We keep the previous definition of the good events $(\mathcal E_m)_{m\in\N}$ and aim to prove
\begin{align}\label{eq:other_bye_badevents}
P^\eta(X_T^{\vec v}-X_1^{\vec v}\geq (v+\varepsilon)T-1) 
	&\leq P^\eta\Big(\sum_{m=1}^{T-1}\tilde Y_m\geq (v+\varepsilon)T-1\Big) +\sum_{m=1}^{T-1}\P^\eta(\mathcal E^c_{m-1}).
\end{align}
As we have seen before, we can use a mild large deviations argument as in \eqref{eq:ld_iid} combined with \eqref{eq:unfavorable} to bound \eqref{eq:other_bye_badevents} and conclude \eqref{eq:other_half_lemma}.

\medskip

The proof of \eqref{eq:other_bye_badevents} is similar to the one of \eqref{eq:bye_badevents}. We proceed in the same way as before and obtain the equivalent of \eqref{eq:reprise} that now reads as follows:
For any $b\in \R$, $m\geq 1$, on the event $\mathcal E_{m-1}$ we have that
%we can apply again Lemma \ref{lemma2} with $M=0$, $t=1$, $L=\phi_T$ to control the probability in the last display, so that if $T$ is large enough we have since $\gamma \geq \phi_T^{3}$,
\begin{align*}
P^\eta(X_{m+1}^{\vec v}-X_m^{\vec v}\geq b\ |\ \eta(m-1),X_1,X_m)&
%\leq \sum_{j\notin \mathcal J(b)}
%		\sum_{k\in\mathcal T} \frac{k_j+1}{2^N}p_k+c\,\delta_T    \\
	\leq \sum_{j\notin \mathcal J(b)}
		\sum_{k\in\mathcal T} \frac{k_j}{2^N}p_k+\frac{2d-|\mathcal J(b)|}{2^N}+c\,\delta_T\ ,
\end{align*}		
where again $\delta_T$ is some function that goes to $0$ when $T$ grows to infinity.
%
%
%where we have used again that 
%$\P^\eta\big(T(\eta_z(1))=k)\big)\leq p_k+\delta_T$.
%
For all $ j\not = j_{\max}$ we have $ \sum_{k\in\mathcal T}\frac{k_j}{2^N}p_k\leq P(Y_m=v_j)+\delta$, while $ \sum_{k\in\mathcal T}\frac{k_{j_{\max}}}{2^N}p_k\leq P(Y_m=v_{j_{\max}})-(2d-1)\delta$. Hence,
\begin{align*}
P^\eta(X_{m+1}^{\vec v}-&X_m^{\vec v}\geq b\,|\,\eta(m-1),X_1,X_m)\\
	&\leq P(Y_m\geq b)-|\mathcal J(b)|\delta+(2d-|\mathcal J(b)|)2^{-N}+c\,\delta_T\,\leq \,P(Y_m\geq b)
\end{align*}
if $T$ is big enough and $|\mathcal J(b)|\geq 1$ as $\delta>2d/2^{N}$ (here again, if $|\mathcal J(b)|=0$, that is $b<v_{\min}$ the inequality is trivial). From there we prove easily the equivalent of \eqref{eq:but} with the opposite inequality and conclude the proof of \eqref{eq:other_bye_badevents}.
%Going back to \eqref{eq:but2}, we obtain easily \eqref{eq:m_goodorbad2}.
%
%
%
%
% For $m=1$ we proceed similarly to \eqref{eq:m=1}: We keep the same definition of $J(a)$ as before and obtain
%\begin{align*}
%P^\eta(X_{2}^{\vec v}-X_1^{\vec v}> a)	
%	&\leq \sum_{z\sim 0}P^\eta(X_1=z)\sum_{j\not \in J(a)}
%		\sum_{k\in\mathcal T} \frac{k_j+1}{2^N}\P^\eta(T(\eta_z(1))=k)\\
%	&  \leq \sum_{j\not\in J(a)}
%		\sum_{k\in\mathcal T} \frac{k_j}{2^N}p_k+\frac{2d-|J(a)|}{2^N}+c\,\delta_T.
%\end{align*}
\end{proof}

\subsection{Renormalization step}
\begin{proposition}%[Equivalent of Proposition 3]
\label{prop:renorm}
Take any $\varepsilon>0$. Let $N$, $T$ and $\gamma$ be large enough.  Given $\eta\in\mathcal S^{\Z^d}$ and $t\geq T$ such that
$$
y\in G(\eta,\phi_t)\qquad \forall y\in B_t, 
$$
it holds that
\begin{align*}
P^\eta\big(|{X_t^{\vec v}}-vt|\geq\varepsilon t\big )
	\leq {\rm e}^{-\phi_t^{1/4}}.
\end{align*}
\end{proposition}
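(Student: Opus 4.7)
My plan is to prove Proposition~\ref{prop:renorm} by induction on a sequence of increasing scales starting from~$T$, with Proposition~\ref{lemma4} as the base case. Given the statement at some scale~$s$, I would show it also holds at a larger scale~$t > s$ by partitioning $[0,t]$ into $M := \lfloor t/s \rfloor$ blocks of size~$s$, controlling the environment at each block start via Proposition~\ref{proposition1}, coupling the block-level increments $Z_k := X_{(k+1)s}^{\vec v} - X_{ks}^{\vec v}$ with i.i.d.\ random variables, and concluding by a large-deviations estimate on the i.i.d.\ sum. The iteration mimics the proof of Proposition~\ref{lemma4}, with the role of ``one walker step'' and ``homogenization via $\mu$'' replaced by ``one length-$s$ block'' and ``induction hypothesis at scale~$s$''.

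The first ingredient is environmental control. For each $k \geq 1$, applying the induction hypothesis to the $k$-th block requires that $B_s(X_{ks})$ be $\phi_s$-good for $\eta(ks)$. Starting from the hypothesis that $B_t$ is $\phi_t$-good at time $0$ and applying Proposition~\ref{proposition1} with $L=\phi_t$, $J=\phi_s$ and evolution time~$ks$ (valid for $\gamma$ large enough that $\gamma s > \phi_t^3 = t^{3/100}$, which is arranged by choosing $s$ a suitable power of~$t$), one gets a per-site bad probability of order $c_1 {\rm e}^{-c_2\phi_s^d(\epsilon_{\phi_s}-\epsilon_{\phi_t})^2}/(\epsilon_{\phi_s}-\epsilon_{\phi_t})^2$. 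Since $(\epsilon_{\phi_s}-\epsilon_{\phi_t})^2$ is of order $1/\log^2 t$ and $\phi_s^d$ is a positive power of~$t$ when $d\geq 1$, a union bound over the $M\cdot|B_s|$ (site, block) pairs that are relevant (using also that the walker stays inside~$B_t$) is comfortably below ${\rm e}^{-\phi_t^{1/4}}/3$.

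The second ingredient is walker control across blocks. A naive union bound of the induction hypothesis over the $M$ blocks would give only $M\,{\rm e}^{-\phi_s^{1/4}}$, which is too weak to reach the target ${\rm e}^{-\phi_t^{1/4}}$ at scale~$t$. Instead, I would adapt the coupling argument from the proof of Proposition~\ref{lemma4} to the block level: on the event that the environment is $\phi_s$-good around every $X_{ks}$, construct i.i.d.\ random variables $(W_k)$ with values in $[-s,s]$ and mean very close to $vs$, such that $Z_k$ is stochastically dominated by, and dominates, $W_k \pm \varepsilon s/4$ thanks to the Markov property of the joint walker-environment process and the induction hypothesis. A classical Cram\'er/Hoeffding large-deviations bound then yields $P(|\sum_k W_k - Mvs|\geq \varepsilon t/2) \leq {\rm e}^{-cM}$; with $M=t/s$ and $s$ chosen (e.g.) as $\sqrt{t}$, this is bounded by ${\rm e}^{-c\sqrt{t}}$, much smaller than ${\rm e}^{-\phi_t^{1/4}}/3$.

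The principal obstacle I anticipate is setting up the block-level i.i.d.\ coupling rigorously. In the base case, the comparison with the $(Y_k)$ sequence is carried out step-by-step using Lemma~\ref{lemma2} to match single-step transition probabilities with their averaged counterparts. For the iteration step we lack a direct analogue: the induction hypothesis only guarantees $Z_k \in (vs - \varepsilon s/2,\,vs + \varepsilon s/2)$ with high probability, and this qualitative ``narrow window'' control must be promoted to a stochastic comparison with an i.i.d.\ bounded variable so that the large-deviations bound can be applied to $\sum_k Z_k$ rather than to a mere union over blocks. Replicating the $(Y_k)$-style coupling at the block level, while propagating the conditioning on all $M$ environment good events and ensuring that the walker remains in~$B_t$ so that Proposition~\ref{proposition1} applies uniformly, is the technical crux. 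A secondary bookkeeping challenge is to design the sequence of scales $T = t_0 < t_1 < \cdots$ (with $t_{i+1}$ a suitable power of $t_i$) so that the iteration reaches every $t \geq T$, not merely a sub-sequence, while preserving the stated bound~${\rm e}^{-\phi_t^{1/4}}$ at each step.
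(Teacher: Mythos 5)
Your overall strategy matches the paper's in every structural respect: induction on a doubling sequence of scales with Proposition~\ref{lemma4} as base case, block decomposition of $[0,t]$ into roughly $\sqrt t$ blocks of size $\sqrt t$, environmental control at each block start via Proposition~\ref{proposition1} (with exactly the $L=\phi_t$, $J=\phi_s$ choice the paper makes), an i.i.d.\ stochastic comparison for the block increments, and a concentration bound on the resulting sum. Your observation that a naive union bound $M\,{\rm e}^{-\phi_s^{1/4}}$ over blocks is too weak is precisely what forces the i.i.d.\ argument, and your worry about an initial warm-up period is resolved in the paper by waiting a time $r\in[t_n,4t_n]$ before invoking the block structure and absorbing the error with the crude bound $X_r^{\vec v}\geq -r$; this is what also makes $\gamma r>\phi_{t_{n+1}}^3$ automatic.

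There is, however, one missing idea which is not bookkeeping but the load-bearing beam, and your phrase ``$W_k$ with mean very close to $vs$ such that $Z_k$ dominates $W_k\pm\varepsilon s/4$'' quietly assumes more than the induction hypothesis can provide. The hypothesis at scale $s$ only gives $X_s^{\vec v}\geq(v-\varepsilon)s$ with high probability; the best worst-case i.i.d.\ minorant you can build therefore has mean roughly $(v-\varepsilon)s$, not $vs$. After the Hoeffding fluctuation ($\sim t_n^{3/4}$ per block, totalling $o(t)$) and the warm-up slack, the conclusion at scale $t$ reads $X_t^{\vec v}\geq(v-\varepsilon)t-o(t)$, which is strictly weaker than the induction hypothesis one wants to propagate, so the induction does not close for a fixed $\varepsilon$. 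The paper resolves this by proving a variable-constant statement: at level $n$ one shows $P^\eta(X^{\vec v}_{t_n}<c_n t_n)\leq\frac12\,{\rm e}^{-\phi_{t_n}^{1/4}}$ with $c_n$ a decreasing sequence, $c_0=v$, $\lim_n c_n=(1-\varepsilon/2)v$, and $c_n-c_{n+1}\asymp(\varepsilon/(1+\varepsilon))^{n+1}$. Because $t_n$ grows doubly exponentially in $n$, the per-level loss from concentration is $O(t_n^{-1/4})$ and is eventually dominated by the geometrically shrinking margin $c_n-c_{n+1}$. (Relatedly, the paper chooses the auxiliary i.i.d.\ variables to be \emph{two-valued}, $Z_k=c_n t_n$ with probability $1-{\rm e}^{-\phi_{t_n}^{1/4}}$ and $Z_k=-t_n$ otherwise, which makes the stochastic domination you flag as ``the technical crux'' an immediate consequence of the induction hypothesis plus $|X^{\vec v}_{t_n}|\leq t_n$; no subtle coupling construction is needed.)
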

\begin{proof}
We consider $T$ so large that the conclusion of Proposition \ref{lemma4} holds for the time $T^{1/3}$ and consider $t\geq T$.
We first prove that 
\begin{align}\label{cane}
P^\eta\big(X_t^{\vec v}<(v-\varepsilon)\, t\big)\leq \tfrac 12\,{{\rm e}^{-\phi_t^{1/4}}}.
\end{align}
We define a sequence of times $(t_n)_{n\in\N}$ such that $t_0\in[T^{1/3},T]$ and, for $n\geq 0$,
\begin{align}\label{eq:time}
t_{n+1}\in[t_n^2,(t_n+1)^2]\,,
\end{align}
so that for some $\tilde n\geq 0$ we have $t_{\tilde n}=t$ (for example define $t_{\tilde n}=t$ for a suitable $\tilde n$ and then define recursively $t_{k-1}=\lfloor {\sqrt t_k}\rfloor$ until reaching the interval $[T^{1/3},T]$). We also define a sequence of positive real numbers $(c_n)_{n\in\N}$ such that
\begin{align*}
v\geq c_n:=\frac 12\Big(3-\sum_{k=0}^n\big(\frac{\varepsilon}{1+\varepsilon}\big)^k\Big)\,v\geq (1-\varepsilon/2) \,v\qquad \mbox {for }n\in\N.
\end{align*}
Note that $(c_n)_{n\in\N}$ is a decreasing sequence and that $c_0=v$ and $\lim_{n\to\infty}c_n=v(1-\varepsilon/2)$. We want to prove by induction that, given $n\in\N$ and $\eta\in\mathcal S^{\Z^d}$, 
\begin{align}\label{passo_induttivo}
\mbox{\textit{if $\forall y\in B_{t_n}$ it holds that $y\in G(\eta, \phi_{t_n})$, then}}\,
P^\eta(X^{\vec v}_{t_n}< c_n t_n)\leq \tfrac 12\, {\rm e}^{-\phi_{t_n}^{{1}/4}}.
\end{align}
Proposition \ref{lemma4} takes care of the initialization step $n=1$. We assume now that \eqref{passo_induttivo} holds for some $n\geq 1$ and we show that this implies the case $n+1$. %The idea of the proof is the same as in Proposition \ref{lemma4}. 
By \eqref{eq:time} we can write $t_{n+1}=t_n(t_n-1)+r$, where the rest $r$ is a number between, say, $t_n$ and $4t_n$. We first have to wait an initial time lapse, since we can not use the iteration assumption at time 0 as the maximal trap could be of order $\phi_{t_{n+1}}$ instead of  $\phi_{t_{n}}$:
\begin{align*}
P^\eta(X_{t_{n+1}}^{\vec v}< c_{n+1}t_{n+1})\leq P^\eta( X_{t_{n+1}}^{\vec v}-X_r^{\vec v}< c_{n+1}t_{n+1}+r)\,.
\end{align*}
We define on the same probability space (enlarged if necessary) a sequence 
$(Z_k)_{k\in\N}$ of i.i.d.~random variables independent from $\eta$, from the interchange process and from the walker, with distribution
\begin{align*}
P^\eta(Z_k=c_nt_n)=1-{\rm e}^{-\phi_{t_n}^{1/4}},\qquad P^\eta(Z_k=-t_n)={\rm e}^{-\phi_{t_n}^{1/4}}.
\end{align*}
Using our inductive hypothesis we aim to show that
\begin{align}\label{galoppo}
P^\eta(& X_{t_{n+1}}^{\vec v}-X_r^{\vec v}< c_{n+1}t_{n+1}+r)\nonumber\\
	&\leq P^\eta\Big(\sum_{k=1}^{t_n-1}Z_k\leq c_{n+1}t_{n+1}+r\Big)
		+\sum_{\substack{y\in B_{t_{n+1}}\\r\leq s \leq t_{n+1}}}
		\P^\eta\big(y\not\in G(\eta(s),\phi_{t_n})\big)\,.
\end{align}
To obtain this, %since $t_{n+1}=t_n(t_n-1)+r$, 
we just show that, for all $m\geq 1$ and for any $a\in\R$, we have
\begin{align}\label{trotto}
P^\eta(& X_{mt_n+r}^{\vec v}-X_r^{\vec v}\leq a)
	\leq  P^\eta( X_{(m-1)t_{n}+r}^{\vec v}-X_r^{\vec v}+Z_m\leq a)
		+\P^\eta(\mathcal E^c_{m-1})\,,
\end{align}
where for $m\geq 0$
\begin{align*}
\mathcal E_{m}:=\{y\in G(\eta(mt_n+r),\phi_{t_n}),\,\forall y\in B_{(m+1)t_n+r}\}
\end{align*}
is the event of having a favorable environment at step $m$ (i.e., at time $mt_n+r$).

\smallskip

Let us prove \eqref{trotto}:
\begin{align*}
&\qquad P^\eta( X_{mt_n+r}^{\vec v}-X_r^{\vec v}\leq a)
	= P^\eta( X_{mt_n+r}^{\vec v}-X_{(m-1)t_n+r}^{\vec v}+X_{(m-1)t_n+r}^{\vec v}-X_r^{\vec v}\leq a)\\
	&\leq \sum_{x,y\in\Z^d}P^\eta( X_{mt_n+r}^{\vec v}-X_{(m-1)t_n+r}^{\vec v}\leq a-\langle y-x,\vec v\rangle\,|\,A_{x,y})P^\eta(X_{(m-1)t_n+r}=y,X_r=x)
		+\P^\eta(\mathcal E^c_{m-1})
\end{align*}
with 
$
A_{x,y}:=\{X_{(m-1)t_n+r}=y,X_r=x,\mathcal E_{m-1}\}.
$
Thanks to the inductive step we have that
\begin{align*}
P^\eta( X_{mt_n+r}^{\vec v}-X_{(m-1)t_n+r}^{\vec v}\leq a-\langle y-x,\vec v\rangle\,|\,A_{x,y})
	\leq P^\eta(Z_m\leq a-\langle y-x,\vec v\rangle),
\end{align*}
and therefore 
\begin{align*}
P^\eta(& X_{mt_n+r}^{\vec v}-X_r^{\vec v}\leq a)
	\leq P^\eta(Z_m+ X_{(m-1)t_n+r}^{\vec v}-X_r^{\vec v}\leq a)+\P^\eta(\mathcal E^c_{m-1})\,.
\end{align*}
This proves \eqref{trotto}. Iterating \eqref{trotto} and bounding $\sum_{j=1}^{t_n-1}\P^\eta(\mathcal E^c_j)$ with the union bound of the probability that no trap bigget than $\phi_{t_n}$ appears after time $r$ and until time $t_{n+1}$ in the whole ball of radius $t_{n+1}$ gives \eqref{galoppo}. 

\smallskip

We are left to bound the r.h.s.~of \eqref{galoppo}.  The first summand is equal to
\begin{align}\label{polio}
 P^\eta\Big(\sum_{k=1}^{t_n-1}\big(Z_k&-E^\eta[Z_k]\big)\leq c_{n+1}t_{n+1}+r-E^\eta[Z_k](t_n-1)\Big)\nonumber\\
	&\leq P^\eta\Big(\sum_{k=1}^{t_n-1}\big(Z_k-E^\eta[Z_k]\big)\leq (t_{n}-1)\big(c_{n+1}t_n- E^\eta[Z_k]\big)+8t_n\Big)\nonumber\\
	&\leq P^\eta\Big(\sum_{k=1}^{t_n-1}\big(Z_k-E^\eta[Z_k]\big)\leq -\,t_{n}^{3/4}(t_n-1)\Big)
		\leq{\rm e}^{-c\,t_n^{1/2}}\leq {\rm e}^{-\phi_{t_n}^{1/3}}\,,
\end{align}
where we have used the fact that $c_{n+1}<1$ and that $r\leq 4t_n$ for the first inequality. The second inequality just comes from the definitions of $t_n$ and $c_n$. We point out that for this inequality there might be the need to choose a $T$ bigger than the one considered until this point, depending on $\varepsilon$; this constitutes no problem, since in this case it will be sufficient to repeat the proof with a bigger $T$ (and hence a bigger $\gamma$). The last inequality in \eqref{polio} is just a classical concentration inequality for sums of independent bounded random variables (note that $|Z_k|\leq t_n$ a.s.).

%*****************************
%
%\rosso{PICTURE????}
%We are left to bound the r.h.s.~of \eqref{galoppo}. For the first summand, we have, by the definition of $Z_k$ and with a crude estimate,
%\begin{align}\label{lavita}
%E^\eta[Z_k]=(1-{\rm e}^{-\phi_{t_n}^{d??/4}})c_nt_n-{\rm e}^{-\phi_{t_n}^{d???/4}}t_n
%	\geq \frac{c_n+c_{n+1}}{2}t_n\,.
%\end{align}
%Therefore
%\begin{align}%\label{polio}
%P^\eta\Big(&\sum_{k=1}^{t_n-1}Z_k\leq c_{n+1}t_{n+1}+r\Big)\nonumber\\
%	&\leq P^\eta\Big(\sum_{k=1}^{t_n-1}\big(Z_k-E^\eta[Z_k]\big)\leq c_{n+1}t_{n}(t_n-1)+8t_n-E^\eta[Z_k](t_n-1)\Big)\nonumber\nonumber\\
%	&\leq P^\eta\Big(\sum_{k=1}^{t_n-1}\big(Z_k-E^\eta[Z_k]\big)\leq -\frac{c_{n}-c_{n+1}}{2}t_{n}(t_n-1)+8t_n\Big)\nonumber\\
%	&\leq P^\eta\Big(\sum_{k=1}^{t_n-1}\big(Z_k-E^\eta[Z_k]\big)\leq -t_{n}^{3/4}(t_n-1)\Big)
%		\leq{\rm e}^{-ct_n^{1/2}}
%\end{align}
%
%\rosso{Bisogna prendere $T$ adattato ad $\varepsilon$!!!}
%where in the first line we just used the decomposition $t_{n+1}=t_n(t_n-1)+r$, the fact that $c_{n+1}<1$ and that $r\leq 4t_n$, in the second line we used \eqref{lavita}, in the third line the definitions of $t_n$ and $c_n$ and finally a classical concentration inequality for sums of independent bounded random variables (note that $|Z_k|\leq t_n$ a.s.).

\smallskip

For the second summand in the r.h.s.~of \eqref{galoppo} we assume (see \eqref{passo_induttivo}) that $y\in G(\eta, \phi_{t_{n+1}})$ for any $y\in B_{t_{n+1}}$ and notice that, by the choice made in \eqref{cento}, $\gamma r\geq \gamma t_n\geq\phi^3_{t_{n+1}}$. We can therefore  use Proposition \ref{proposition1} with $L=\phi_{t_{n+1}}$ and $J=\phi_{t_n}$ (we can suppose to have taken $T$ large enough such that the condition ``$L(\epsilon_J-\epsilon_L)$ large enough" is satisfied) to bound
\begin{align}\label{cheo}
\sum_{\substack{y\in B_{t_{n+1}}\\r\leq s \leq t_{n+1}}}
		P^\eta\big(y\not\in G(\eta(s),\phi_{t_n})\big)
	\leq c_1 t_{n+1}^{d+1}\frac{{\exp}\{-c_2\,\phi_{t_n}^d(\epsilon_{\phi_{t_n}}-\epsilon_{\phi_{t_{n+1}}})^2\}}{(\epsilon_{\phi_{t_n}}-\epsilon_{\phi_{t_{n+1}}})^2}\leq {\rm e}^{-\phi_{t_n}^{d/3}},
\end{align}
where the last bound comes from the explicit expression \eqref{epsilon}. Plugging \eqref{cheo} and \eqref{polio} into \eqref{galoppo} gives the conclusion of \eqref{passo_induttivo}, so that \eqref{cane} is proven. 

\medskip

For the converse
\begin{align*}
P^\eta\big(X_t^{\vec v}>(v+\varepsilon) t\big)
	\leq \tfrac 12\,{{\rm e}^{-\phi_t^{1/4}}}
\end{align*}
we follow a very similar strategy. We take the same definition of $t_n$ and a new sequence of $c_n$'s, this time increasing:
\begin{align*}
c_n:=\frac{1}{2}\Big(1+\sum_{k=0}^n \big( \frac{\varepsilon}{1+\varepsilon}\big)^k\Big)\,v\,.
\end{align*}
The inductive step in this case will be
\begin{align*}%\label{passo_induttivo}
\mbox{\textit{if $\forall y\in B_{t_n}$ it holds that $y\in G(\eta, \phi_{t_n})$, then}}\,
P^\eta(X^{\vec v}_{t_n}> c_n t_n)\leq \tfrac12\, {\rm e}^{-\phi_{t_n}^{{1}/4}}.
\end{align*}
We define new random variables $(Z_k)$ such that $Z_k=c_nt_n$ with probability $1-{\rm e}^{-\phi_{t_n}^{1/4}}$ and $Z_k=t_n$ with probability ${\rm e}^{-\phi_{t_n}^{1/4}}$. The events $\mathcal E_{m-1}$ stay the same. We can now just follow the same steps as before. The only slight difference is with estimate  \eqref{polio}, which now reads
\begin{align*}
% P^\eta\Big(\sum_{k=1}^{t_n-1}\big(Z_k&-E^\eta[Z_k]\big)\geq c_{n+1}t_{n+1}+r-E^\eta[Z_k](t_n-1)\Big)\nonumber\\
  P^\eta\Big(\sum_{k=1}^{t_n-1}Z_k\geq c_{n+1}t_{n+1}+r\Big)
	&\leq P^\eta\Big(\sum_{k=1}^{t_n-1}\big(Z_k-E^\eta[Z_k]\big)\geq (t_{n}-1)\big(c_{n+1}t_n- E^\eta[Z_k]\big)\Big)\nonumber\\
	&\leq P^\eta\Big(\sum_{k=1}^{t_n-1}\big(Z_k-E^\eta[Z_k]\big)\geq t_{n}^{3/4}(t_n-1)\Big)
		\leq{\rm e}^{-c\,t_n^{1/2}}\,,
\end{align*}
which holds with the same arguments as before.
\end{proof}

\subsection{Proof of Proposition \ref{prop:projection}} \label{proiezione}
%
%We actually prove the following proposition that is clearly equivalent to Theorem \ref{thm:ballistic} : 
%\begin{proposition}[Equivalent of Theorem \ref{thm:ballistic}]
%\label{prop:projection}
%Assume \eqref{eq:main assumption} and consider $\vec v \in \R^d$ with norm $1$ such that $\langle\E_{\mu}[s],\vec v\rangle>0$. For all $\varepsilon>0$, there exists $\gamma(\varepsilon)$ such that, for all $\gamma\geq \gamma(\varepsilon)$,
%$$
%P-a.s.\ eventually \quad \left|\left\langle\frac{X_n}{n} -\E_\mu[s]  , \vec v \right\rangle\right|<\varepsilon.
%$$
%%$$
%%P-a.s.\ eventually \quad \frac{X_n^{\vec v}}{n}   \in B_\varepsilon(\E_\mu[s]).
%%$$
%\end{proposition}

%\begin{proof}[Proof of Proposition \ref{prop:projection}]
%We follow here the proof of \cite{HS15}.
%Fix $\vec v \in \R^d$ as in the previous theorem and 
We fix $\varepsilon >0$ and pick $N$, $T$ and $\gamma$ large enough so that Proposition \ref{prop:renorm} holds. 
Since under $\P$ the initial distribution for the environment is a product measure, by a union bound we have that %$\P$ is a Bernoulli product measure, 
% there exists  such that
\begin{align}
\P(\exists y \in B_t \textrm{ s.t.~} y\notin G(\eta,\phi_t)) 
	&\leq \sum_{y\in \mathrm B_t} \sum_{L \ge \phi_t} \sum_{k\in\mathcal T}  \P 			\big( |\langle \eta(t) \rangle^k_{y,L} -p_k|> \epsilon_{\phi_t}\big)\nonumber\\
	&\leq\sum_{y\in \mathrm B_t} \sum_{L \ge \phi_t} \sum_{k\in\mathcal T:\,p_k>0} {\rm e}^{- c L^d\epsilon^2_{\phi_t}}
	\leq \,|\mathcal T|\, t^d \, \frac{{\rm e}^{-c\,  \phi_t \epsilon^2_{\phi_t}}}{c\,\epsilon^2_{\phi_t}} 
	\leq c_1\, {\rm e}^{-t^{c_2}}\,.\nonumber
\end{align}
By the Borel-Cantelli lemma, it then follows from Proposition \ref{prop:renorm} that $P-a.s.$, eventually ${X_t^{\vec v}}/{t}\in(v-\varepsilon,v+\varepsilon)$.
% $\left|\langle \frac{X_t}{t} -\E_\mu[d(s)] , \vec v\rangle \right|<\varepsilon$.

%\end{proof}

\smallskip

\appendix

\section{}\label{appendix}

%Here we present the proofs of Claim \ref{claim:heatkernel_decrease} and Corollary \ref{cor:crowns}.

\subsection{Proof of Claim  \ref{claim:heatkernel_decrease}}\label{appendix1}
Take $f:\Z^d\to\R$ such that $f(x)-f(x+e_i)\geq 0$ for all $x\in\Z^d$ with $\langle x,e_i\rangle\geq 0$ and such that $f(x)=f(\tilde x_i)$ for all $x\in\Z^d$, where $\tilde x_i:=(x_{1},x_{2},...,-x_i,...,x_d)$. Consider $f(t,x)$, the unique solution of
\begin{align*}
\begin{cases} 
	\partial_tf(t,x)&=\gamma\Delta f(t,x) \quad\forall t\geq 0 \\ f(0,x)&=f(x)\,.
\end{cases}
\end{align*}
Then we have that (1) $f(t,x)=f(t,\tilde  x_i)$ for all $ t\geq 0,\,x\in\Z^d$ and (2) $f(t,x)-f(t,x+e_i)\geq 0$ for all $ t\geq 0$ and $ x\in\Z^d$ with $\langle x,e_i\rangle>0$. The first property is true by symmetry. The second can be explained by noticing, for example, that the continuous-in-time function
\begin{align*}
g(t,x):=\begin{cases} 
	f(t,x)-f(t,x+e_i) \quad\mbox{if}\, \langle x,e_i\rangle\geq0\\ 
	f(t,x)-f(t,x-e_i)\quad\mbox{otherwise}
\end{cases}
\end{align*}
is clearly everywhere positive for $t=0$, but also stays positive for all $t>0$. In fact, take a time $t$ such that $g$ is positive everywhere; then, in the points $x$ where $g(t,x)=0$, one has $\partial_tg(t,x)\geq 0$, as it is easy to verify.
To conclude the proof, just take $f(t,x)=p_M(t,x)$. 

\subsection{Proof of the Corollary \ref{cor:crowns}}\label{appendix2}
If $y$ and $y'$ are in the same crown there is nothing to prove. Suppose then that $y'\in C_{n'}$ and $y\in C_n$ with $n'>n$. Consider $\hat x_{n'}$ and an $e\in\mathcal N$ such that $\langle \hat x_{n'},e\rangle>0$ and $\hat x_{n'}-me$  belongs to $C_n$ for some $m\in\N$. Applying Claim \ref{claim:heatkernel_decrease} $m$-times, we have that $p_M(t,\hat x_{n'}-me)\geq p_M(t,\hat x_{n'})$ and therefore $p^+_M(t,y)\geq p^+_M(t,y')$. The same argument reversed gives the $p^-_M(\cdot,\cdot)$ case.
%
%Take $\bar z\in C(y')$ to be a maximizer of $Q_M(t,y')$. Take an $e\in\mathcal N$ such that $\langle \bar z,e\rangle>0$. Take the smallest $n\in\N$ such that $\bar z-ne$  belongs to $C(y)$ (note that this $n$ exists CHECK!!!). Applying the claim $n$-times, we have that $p_M(t,\bar z-ne)\geq p_M(t,\bar z)$ and therefore $Q_M(t,y)\geq Q_M(t,y')$.

\subsection{Proof of Claim \ref{climatizzo}}\label{appendix3}

%As We will call $c,c_1,c_2,...$ constants the value of which might change from an expression to the other. These constants might depend on the dimension $d$ but will not depend on the other variables (in particular, they will not depend on $M$).
We focus on \eqref{dumbo} and split the sum into three parts: 
\begin{align}\label{acca}
\sum_{n\geq L}E_M(t,n)|C_n|
	&=\sum_{n= L}^{M_--1}E_M(t,n)|C_n|+\sum_{n=M_-}^{M_+-1}E_M(t,n)|C_n|+\sum_{n= M_+}^{\infty}E_M(t,n)|C_n|\nonumber\\
	&=:H_1+H_2+H_3\,,
\end{align}
where we have called $M_-:=\big\lfloor M-(\gamma t)^{1/2+\varepsilon}\big\rfloor\vee L$ and $M_+:=\big\lfloor M+(\gamma t)^{ 1/2+\varepsilon}\big\rfloor\vee L$ (note that the first two sums might be empty). 
Part $H_3$ falls into a large deviations regime, since it can be dominated by the probability that a walker starting from far away enters the ball of radius $M$ by time $t$:
\begin{align*}
H_3=\sum_{n=M_+}^\infty E_M(t,n)|C_n|
	&\leq c\sum_{n=M_+}^\infty p_M(t,\hat x_n)\,n^{d-1}
	 = c \sum_{n=M_+}^\infty\frac{n^{d-1}}{M^d}P_{\hat x_n}(Y_t\in B_M),
	%&\leq c \sum_{n=M_+}^\infty\frac{n^{d-1}}{M^d}P_{0}(|X_t|\geq n-M)\\
\end{align*}
where  $(Y_t)_{t\geq 0}$ is a simple random walk that jumps at rate $\gamma t$. 
Notice that $P_{\hat x_n}(Y_t\in B_M)\leq P_{0}(|Y_t|\geq n-M)\leq {\rm e}^{-c_1\, (n-M)^2/(\gamma t)}$, where for the last inequality we have applied the continuous-time version of the large deviations estimate \cite[Eq.~2.7]{LL10}. Hence, as can be easily checked,
\begin{align}\label{eq:small_n}
%\sum_{n=M_+}^\infty E_M(t,n)|C_n|
H_3	&\leq c \sum_{n=M_+}^\infty\frac{n^{d-1}}{M^d}{\rm e}^{-c_1 (n-M)^2/(\gamma t)}
	\leq c_2\,{\rm e}^{-c_3 (\gamma t)^{2\varepsilon}}.
	%c\int_{M_+}^\infty \frac{n^{d-1}}{M^d}{\rm e}^{-\beta (n-M)^2/(\gamma t)}\,{\rm d}n\leq \frac c{(\gamma t)^{1/2}},
\end{align}
%as can be seen by making the change of variables $m=(n-M)/\sqrt {\gamma t}$ and noticing that $(\sqrt{\gamma t}m+M)^{d-1}\leq c\,(\sqrt{\gamma t}mM)^{d-1}$ for some constant $c$.
%we obtain
%\begin{align}\label{eq:small_n}
%H_3%=\sum_{n=M_+}^\infty E_M(t,n)|C_n|
%	&\leq c\frac {(\gamma t)^{d/2}}M\int_{(\gamma t)^\varepsilon}^\infty {m^{d-1}}
%		{\rm e}^{-\beta m^2}\,{\rm d}m \nonumber\\
%	&\leq c\frac {(\gamma t)^{d/2}}M{\rm e}^{-\beta (\gamma t)^{2\varepsilon}}
%		\leq \frac c{(\gamma t)^{1/2}}.
%\end{align}
%where the constant $c>0$ was changing from line to line, but never depended on the other variables.
Part $H_1$, if not empty, %, with $L\leq n <M_-$ 
can be treated similarly as it can be dominated by the probability that a walker that starts well inside the ball of radius $M$ will leave the ball by time $t$:
%\begin{align}\label{eq:large_n}
%H_1=\sum_{n=L}^{M_-} E_M(t,n)|C_n|
%	& \leq  c \sum_{n=L}^{M_-}\frac{n^{d-1}}{M^d}\big(1-P_{\hat x_n}(Y_t\not\in B_M)-1+P_{\check x_n}(Y_t\not\in B_M)\big)\nonumber\\
%%	& \leq c \sum_{n=L}^{M_-}\frac{n^{d-1}}{M^d}\big(1-(1+P_{\check x_n}(Y_t\not\in B_M))\big)\\
%	&\leq c \sum_{n=L}^{M_-}\frac{n^{d-1}}{M^d}P_{\check x_n}(Y_t\not\in B_M)
%	\leq  c_1\,{\rm e}^{-c_2 (\gamma t)^{2\varepsilon}}.
%\end{align}
\begin{align}\label{eq:large_n}
H_1=\sum_{n=L}^{M_-} E_M(t,n)|C_n|
	& \leq  c \sum_{n=L}^{M_-}\frac{n^{d-1}}{M^d}\big(1-P_{\check x_n}(Y_t\in B_M)\big)
	\leq  c_2\,{\rm e}^{-c_3 (\gamma t)^{2\varepsilon}}.
\end{align}
%\begin{align*}
%H_1=\sum_{n=L}^{M_-} E_M(t,n)|C_n|
%	& \leq  c\sum_{n=L}^{M_-} (p_M(t,\hat x_n)-p_M(t,\check x_n))n^{d-1}\\
%	& \leq c \sum_{n=L}^{M_-}\frac{n^{d-1}}{M^d}(1-P_{\hat x_n}(Y_t\not\in B_M)-1+P_{\check x_n}(Y_t\not\in B_M))\\
%	&\leq c \sum_{n=L}^{M_-}\frac{n^{d-1}}{M^d}P_{\check x_n}(Y_t\not\in B_M).
%\end{align*}
%Using the same large deviations estimates as before, we obtain
%\begin{align}
%%\sum_{n=L}^{M_-} E_M(t,n)|C_n|
%H_1
%	\leq c \sum_{n=L}^{M_-}\frac{n^{d-1}}{M^d}{\rm e}^{-\beta (M-n)^2/(\gamma t)}
%	\leq c \,{\rm e}^{-\beta (\gamma t)^{2\varepsilon}} \sum_{n=L}^{M_-}\frac{n^{d-1}}{M^d}
%	\leq \frac{c}{ (\gamma t)^{1/2}}.
%\end{align}

\smallskip

Part $H_2$, when not empty, is more delicate.
For $M_-\leq n < M_+$ we write
\begin{align}\label{eq:triangular} 
E_M(n,t)&\leq |p_M(t,\hat x_n)-G_M(t,\hat x_n)|
+|G_M(t,\hat x_n)-G_M(t,\check x_n)|+|G_M(t,\check x_n)- p_M(t,\check x_n)|\nonumber\\
	&=:A_1(n)+A_2(n)+A_3(n)=A_1+A_2+A_3,
\end{align}
where  $G_M(t,x):=\frac{1}{|B_M|}\sum_{y\in B_M} G(t,x-y)$ and, for $z\in\R^d$, 
$
G(t,z):=\frac{{\rm exp}\{-{\|z\|^2}/{(2\gamma t)}\}}{{(2\pi \gamma t)}^{d/2}}
$
is the Gaussian kernel. 

\smallskip

We treat together  $A_1$ and $A_3$ from \eqref{eq:triangular}, since the calculation here below is valid for both of them (as can be seen simply by substituting $\hat x_n$ by $\check x_n$). We consider just $A_1$ for simplicity and write 
\begin{align}\label{eq:totti2}
A_1  %={\big| p_M(t,\hat x_n)-G_M(t,\hat x_n)\big|}
	\leq \frac{1}{|B_M|}\sum_{y\in \hat S_1\cup \hat S_2}{\big| p(t,\hat x_n-y)-G(t,\hat x_n-y)\big|}\,,
\end{align}
where $\hat S_1=\hat S_1(n):=\{y\in B_M:\, \|y-\hat x_n\|\leq (\gamma t)^{\frac 12+\varepsilon}\}$ and $\hat S_2=\hat S_2(n):=\{y\in B_M:\, \|y-\hat x_n\|> (\gamma t)^{\frac 12+\varepsilon}\}$.
%For the first and the third summand of \eqref{eq:triangular}, for $x\in\{\hat x_n, \check x_n\}$, we split as before 
%\begin{align}\label{eq:totti2}
%\lefteqn{| p_M(t,x)-\bar p_M(t,x)|}\quad&\\
%	&= \sum_{\substack{y\in B_M:\\ \,|y-x|\leq (\gamma t)^{1/2+\varepsilon}}}
%		\frac{| p(t,x-y)-\bar p(t,x-y)|}{|B_M|}
%		+\sum_{\substack{y\in B_M:\\ \,|y-x|> (\gamma t)^{1/2+\varepsilon}}}
%		\frac{| p(t,x-y)-\bar p(t,x-y)|}{|B_M|}.\nonumber
%\end{align}
The local central limit theorem (see, e.g., \cite[Eq.~(2.5)]{LL10}) says that $|p(t,z)-G(t,z)|\leq  c\,{(\gamma t)^{-d/2-1}}$ for all $z\in\Z^d$. Therefore% such that $\|z\|\leq (\gamma t)^{1/2}$. Indeed, for $y\in \hat S_1$, $n\in [M_-,M_+]$, we have $\|\hat x_n-y\|\leq (\gamma t)^{1/2}$ {\color{red}{ERRORE?}}, so that
\begin{align*}
\sum_{y\in \hat S_1}{\big| p(t,\hat x_n-y)-G(t,\hat x_n-y)\big|}
	&\leq \frac c{(\gamma t)^{d/2+1}}\Big({(\gamma t)^{(1/2+\varepsilon)d}}\wedge |B_M|\Big)\,.
\end{align*}

%Remembering that by definition $\hat x_n\in C_n$ and that we are in the case with $M_-\leq n\leq M_+$, where $M_\pm:=M\pm(\gamma t)^{1/2+\varepsilon}$, we bound the sum over $S_1$  by
%\begin{align*}
%\sum_{y\in S_1}\frac{| p(t,\hat x_n-y)-\bar p(t,\hat x_n-y)|}{|B_M|}
%	&\leq \frac c{(\gamma t)^{d/2+1}}\Big\{\frac{(\gamma t)^{(1/2+\varepsilon)d}}{M^d}\wedge 1\Big\}
%\end{align*}
%since, by the Local Central Limit Theorem (see, e.g., (2.5) in \cite{LL10})
%$|p(t,z)-\bar p(t,z)|\leq  c\,{(\gamma t)^{-d/2-1}}$ for all $z\in\Z^d$ such that $||z||_2\leq (\gamma t)^{1/2}$, and indeed we have $||\hat x_n-y||_2\leq (\gamma t)^{1/2}$. 

\noindent The sum over $\hat S_2$ can be bounded by
\begin{align*}
% \sum_{y\in \hat S_2} { \big|p(t,\hat x_n-y)-G(t,\hat x_n-y)\big|} &\leq 
\sum_{y\in \hat S_2}  p(t,\hat x_n-y)+G(t,\hat x_n-y)
	&\leq P\big(\|Y_t\|>(\gamma t)^{\frac12+\varepsilon}\big)+P\big(\|Z\|>(\gamma t)^{\frac12+\varepsilon}\big)
	\leq c_1\, {\rm e}^{-c_2 (\gamma t)^{2\varepsilon}},
\end{align*}
%\begin{align*}
%\frac c{M^d} \sum_{y\in S_2} & (p(t,x-y)+\bar p(t,x-y))\\
%	&\leq \frac c{M^d} (P_0(|Y_t|>(\gamma t)^{1/2+\varepsilon})+P(|Z|>(\gamma t)^{1/2+\varepsilon})
%	\leq c\frac {{\rm e}^{-\beta (\gamma t)^{2\varepsilon}}}{M^d}
%\end{align*}
where $(Y_t)_{t\geq 0}$ is a simple random walk on $\Z^d$ starting at the origin that jumps at rate $\gamma t$ and $Z$ is a $d$-dimensional normal random variable with zero mean and covariance $\gamma t\cdot{\rm Id}$.
Inserting the last two displays into \eqref{eq:totti2} shows that 
\begin{align}\label{eq:wedge2}
\sum_{n=M_-}^{M_+-1}(A_1+A_3)|C_n|
	&\leq \frac{c_1}{(\gamma t)^{\frac d2+1}}\Big(\frac{(\gamma t)^{(\frac12+\varepsilon)d}}{M^d}\wedge 1\Big)\sum_{n=M_-}^{M_+}n^{d-1}
	\leq  c_2\,(\gamma t)^{-1+\varepsilon d}\,,
\end{align}
as can be checked by distinguishing the case $M>t^{1/2+\varepsilon}$ and $M\leq t^{1/2+\varepsilon}$.

\smallskip

We estimate now $A_2$. Using the definition of $\hat S_1$ and $\hat S_2$ given above and defining 
$\check S_1=\check S_1(n):=\{y\in B_M:\, \|y-\check x_n\|\leq (\gamma t)^{\frac 12+\varepsilon}\}$ and $\check S_2=\check S_2(n):=\{y\in B_M:\, \|y-\check x_n\|> (\gamma t)^{\frac 12+\varepsilon}\}$, we write
\begin{align}\label{autostrada}
A_2
	\leq \frac{1}{|B_M|}\Big(\Big|\sum_{y\in\hat S_1} &G(t,\hat x_n-y) 
		- \sum_{y\in\check S_1} G(t,\check x_n-y) \Big|\nonumber\\
		&+ \sum_{y\in \hat S_2}  G(t,\hat x_n-y)+\sum_{y\in \check S_2}G(t,\check x_n-y)\Big)\,.
\end{align}
The sums over $\hat S_2$ and $\check S_2$ can be bounded by 
\begin{align}\label{touch}
\sum_{y\in \hat S_2}  G(t,\hat x_n-y)+\sum_{y\in \hat S_2}G(t,\hat x_n-y)
	\leq 2\, P\big(\|Z\|>(\gamma t)^{1/2+\varepsilon} )\leq c_1\, {\rm e}^{-c_2 (\gamma t)^{2\varepsilon}},
\end{align}
where $Z$ is again a $d$-dimensional normal random variable with zero mean and variance $\gamma t\cdot {\rm Id}$. 
We are left with the sums over $\hat S_1$ and $\check S_1$ of \eqref{autostrada}. We use once more the triangular inequality to obtain
\begin{align}\label{chiera}
\Big|&\sum_{y\in\hat S_1} G(t,\hat x_n-y) - \sum_{y\in\check S_1} G(t,\check x_n-y)\Big|
	\leq \sum_{y\in\hat S_1} |G(t,\hat x_n-y) - \bar G(t,\hat x_n-y)|\nonumber \\
		&+ \Big|\sum_{y\in\hat S_1} \bar G(t,\hat x_n-y)- \sum_{y\in\check S_1} \bar G(t,\check x_n-y)\Big|	+ 	\sum_{y\in\check S_1} |G(t,\check x_n-y) - \bar G(t,\check x_n-y)|\,,
\end{align}
where we have called, for $z\in\R^d$, $\bar G(t,z)=\int_{K(z)}G(t,w)\,dw$, and $K(z)$ is the cube of size one centered in $z$. The first and third term of \eqref{chiera} can be roughly bound thanks to the mean value theorem: For each $y\in\hat S_1$ 
\begin{align*}
|G(t,\hat x_n-y) - \bar G(t,\hat x_n-y)|
	&\leq \Big|\int_{K(\hat x_n-y)}G(t,\hat x_n-y)-G(t,w)\,dw\,\Big|\\
	&\leq c_1\frac{\|\zeta\|}{(\gamma t)^{d/2+1}}{\rm e}^{-\frac{\|\zeta\|^2}{2\gamma t}}
	\leq\frac{c_2}{(\gamma t)^{(d+1)/2}}\,,
\end{align*}
with $\zeta$ some point in $K(\hat x_n-y)$. Hence, distinguishing between the case $M\leq (\gamma t)^{1/2+\varepsilon}$ and $M>(\gamma  t)^{1/2+\varepsilon}$,
\begin{align}\label{figliadellamore}
\sum_{y\in\hat S_1} |G(t,\hat x_n-y) - \bar G(t,\hat x_n-y)|
	\leq c\,\frac{(M\wedge (\gamma t)^{1/2+\varepsilon})^d}{(\gamma t)^{(d+1)/2}}\,,
%\begin{cases}\label{figliadellamore}
%c\,{M^d}{(\gamma t)^{-(d+1)/2}}\quad&\mbox{if }M\leq t^{1/2+\varepsilon}\\
%c\,(\gamma t)^{-1/2+\varepsilon d}\quad&\mbox{if }M>t^{1/2+\varepsilon}\,.
%\end{cases}
\end{align}
and the same bound holds for terms of the sum in $\check S_1$. 

For bounding the middle term in \eqref{chiera}, we call $\hat D=\hat D(n):=\bigcup_{y\in \hat S_1}K(\hat x_n-y)\subset\R^d$ and $\check D=\check D(n):=\bigcup_{y\in \check S_1}K(\check x_n-y)\subset\R^d$ and notice that $\sum_{y\in\hat S_1} \bar G(t,\hat x_n-y)$ is exactly the probability that a Brownian Motion $(W_t)$ started at the origin ends up in $\hat D$ after time $\gamma t$ (the same for $\check D$). Call $\mathcal R$ the rotation that brings $\check x_n$ on the half-line from the origin and passing through $\hat x_n$ and let $\check D':=\mathcal R(\check D)$. 
By rotation and translation invariance of $W_t$ we have then that 
\begin{align}\label{merola}
\Big|\sum_{y\in\hat S_1} \bar G(t,\hat x_n-y)- \sum_{y\in\check S_1} \bar G(t,\check x_n-y)\Big| 
	=\big|P(W_t\in \hat D\setminus \check D')-P(W_t\in\check D'\setminus \hat D)\big|\,
\end{align}
and we further notice that  $\hat D\setminus \check D'$ and $\check D'\setminus \hat D$ have volume smaller than $c\,{(M\wedge(\gamma  t)^{1/2+\varepsilon})^{d-1}}$, i.e.~the smallest between the $d-1$-dimensional surface of $B_M$ and the $d-1$-dimensional  surface of $B_{t^{1/2+\varepsilon}}$ times a constant that depends on the dimension but not on $M$ or $n$.

In the case $M>(\gamma t)^{1/2+\varepsilon}$, expression
% it is enough to notice that $\hat D\setminus \check D'$ and $\check D'\setminus \hat D$ have volume smaller than $c\,{(M\wedge(\gamma  t)^{1/2+\varepsilon})^{d-1}}$, i.e.~the smallest between the $d-1$-dimensional surface of $B_M$ and the $d-1$-dimensional  surface of $B_{t^{1/2+\varepsilon}}$ times a constant that depends on the dimension but not on $M$ or $n$. Therefore 
\eqref{merola} can therefore be bounded by $c_1\,{(\gamma  t)^{-1/2+\varepsilon(d-1)}}$. Using this estimate and \eqref{figliadellamore} for bounding \eqref{chiera}, and putting this together with \eqref{touch} back into \eqref{autostrada}, we obtain
\begin{align}\label{astro}
 \sum_{n=M_-}^{M_+-1}A_2(n)|C_n|
	\leq c_1\,\frac{(\gamma  t)^{-\frac12+\varepsilon(d-1)}}{M^d}\sum_{n=M_-}^{M_+}n^{d-1}
	\leq c_2\,\frac{(\gamma  t)^{\varepsilon d}}{M}
	\leq c_3\,{(\gamma  t)^{-1/2+\varepsilon d}}.
\end{align}

In the case $M\leq (\gamma t)^{1/2+\varepsilon}$ we further notice the following: For $n\in[L,t^{1/2+\varepsilon}]$, $\hat D$ and $\check D$ are exactly the same object,  so that  $\hat D$ and $\check D'$ have the same volume. It follows that also $\hat D\setminus \check D'$ and $\check D'\setminus \hat D$ have the same volume, and this volume is smaller than $c\,{M^{d-1}}$ as explained before. Hence, for these $n$'s, we can bound \eqref{merola} by
\begin{align*}
%\big|P(W_t\in \hat D\setminus \check D')-P(W_t\in\check D'\setminus \hat D)\big|
\Big|\int_{\hat D\setminus \check D'}G(t,z)dz - \int_{\check D'\setminus \hat D}G(t,z)dz\Big| 
	&\leq c_1\,\frac{M^{d-1}}{(\gamma t)^{d/2}}\Big|{\rm e}^{-\frac{\|z_+\|^2}{2\gamma t}}-{\rm e}^{-\frac{\|z_-\|^2}{2\gamma t}}\Big|\\
	&\leq c_2 \frac{M^{d-1}( \|z_+\|-\|z_-\|)}{(\gamma t)^{\frac d2+1}} \,\xi\, {\rm e}^{-\frac{\xi^2}{2\gamma t}}
	\leq c_3  \frac{M^d}{(\gamma t)^{\frac{d+1}2}} \,,
\end{align*}
where $z_+$ and $z_-$ are, respectively, the farthest and the closest point to the origin into $\{\hat D\setminus \check D'\}\cup \{\check D'\setminus \hat D\}$ and $\xi\in[\|z_-\|,\|z_+\|]$. For the second inequality we have used the mean value theorem, while for the third we have used the fact that the distance between $z_+$ and $z_-$ is at most a constant times $M$ and then we have took the worst-case for $\xi$ (corresponding to $\xi\simeq(\gamma t)^{1/2}$). For $n\in(t^{1/2+\varepsilon},M_+-1]$ we just bound \eqref{merola} with $cM^{d-1}(\gamma t)^{-d/2}$. We obtain
\begin{align*}
 \sum_{n=M_-}^{M_+-1}A_2(n)|C_n|
	&\leq { \frac{c_1}{(\gamma t)^{(d+1)/2}}}\sum_{n=0}^{\lfloor t^{1/2+\varepsilon}\rfloor}n^{d-1} +  \frac{c_2}{M (\gamma t)^{d/2}}\sum_{n=\lfloor t^{1/2+\varepsilon}\rfloor+1}^{M_+}n^{d-1}
	\leq c_3 (\gamma t)^{-1/2+\varepsilon d}.
\end{align*}
This, together with \eqref{astro} and \eqref{eq:wedge2}, shows that $H_2\leq c\,(\gamma t)^{-1/2+\varepsilon d}$. Putting back together this bound for $H_2$, the bound for $H_3$ \eqref{eq:small_n} and for $H_1$ \eqref{eq:large_n} into \eqref{acca} gives
\eqref{dumbo}. Our estimates for $A_1$, $A_2$ and $A_3$ and \eqref{eq:triangular} also imply \eqref{dumbo2}.
%By our estimates for the error, we also see that the last term of \eqref{napoleon} can always be bounded by $E_M(t,L)|B_{L-1}|\leq c\,L^{d}/(\gamma t)^{(d+1)/2}$.

\small
\bibliography{salvisimenhaus}
\bibliographystyle{alpha}

\end{document}